%%%%%%%%%%%%%%%%%%%%%%%%%%%%%%%%%%%%%%%%%%%%%%%%%%%%%%%%%%%%%%%%%%%%%%%%%%%%%%%%
%2345678901234567890123456789012345678901234567890123456789012345678901234567890
%        1         2         3         4         5         6         7         8

\documentclass[letterpaper, 10 pt, conference]{ieeeconf}  % Comment this line out
                                                          % if you need a4paper
%\documentclass[a4paper, 10pt, conference]{ieeeconf}      % Use this line for a4
                                                          % paper

\IEEEoverridecommandlockouts                              % This command is only
                                                          % needed if you want to
                                                          % use the \thanks command
\overrideIEEEmargins
% See the \addtolength command later in the file to balance the column lengths
% on the last page of the document

% The following packages can be found on http:\\www.ctan.org
%\usepackage{graphics} % for pdf, bitmapped graphics files
%\usepackage{epsfig} % for postscript graphics files
%\usepackage{mathptmx} % assumes new font selection scheme installed
%\usepackage{times} % assumes new font selection scheme installed
%\usepackage{amsmath} % assumes amsmath package installed
%\usepackage{amssymb}  % assumes amsmath package installed

\usepackage{amsfonts,amsmath,amssymb,bm}
\usepackage{color,cases,float}
\usepackage{graphicx}
\usepackage{hyperref}
\usepackage{psfrag}
\usepackage{eucal}
\usepackage{algorithm}
\usepackage{overpic}
\usepackage{subfigure}
\usepackage{url}
\usepackage[all,cmtip]{xy}
\usepackage{algorithm}
\usepackage[noend]{algpseudocode}

\definecolor{darkblue}{rgb}{0.0,0.0,0.6}
\hypersetup{colorlinks,breaklinks,
	linkcolor=darkblue,urlcolor=darkblue,
	anchorcolor=darkblue,citecolor=darkblue}
\newtheorem{assumption}{Assumption}
\newtheorem{definition}{Definition}
\newtheorem{lem}{Lemma}
\newtheorem{rem}{Remark}
\newtheorem{prop}{Proposition}
\newtheorem{theorem}{Theorem}

\newtheorem{example}{Example}

\newcommand{\tat}[1]{\normalsize{{\color{black}\ #1}}}
\newcommand{\E}{\mathrm{E}}

\newcommand{\R}{\mathrm{Re}}
\newcommand{\Proj}{\mathrm{Proj}}

\def\R{\mathbb{R}}

\def\bx{\boldsymbol{x}}

\def\ab{\boldsymbol{a}}

\def\mb{\boldsymbol{m}}

\def\s{\sigma}
\def\g{\gamma}

\def\bmu{\boldsymbol{\mu}}

\def\tat#1{{\color{black}#1}}

\def\R{\mathbb{R}}

\def\xb{\boldsymbol{x}}

\def\ba{\boldsymbol{a}}

\def\bmu{\boldsymbol{\mu}}

\def\Mb{\boldsymbol{M}}

\def\Gb{\boldsymbol{G}}

\def\Rb{\boldsymbol{R}}
\def\Ab{\boldsymbol{A}}

\def\bxi{\boldsymbol{\xi}}
\def\r{~}
\def\BibTeX{{\rm B\kern-.05em{\sc i\kern-.025em b}\kern-.08em
		T\kern-.1667em\lower.7ex\hbox{E}\kern-.125emX}}

%\title{\LARGE \bf
%Preparation of Papers for IEEE CSS Sponsored Conferences \& Symposia
%}
%
%%\author{ \parbox{3 in}{\centering Huibert Kwakernaak*
%%         \thanks{*Use the $\backslash$thanks command to put information here}\\
%%         Faculty of Electrical Engineering, Mathematics and Computer Science\\
%%         University of Twente\\
%%         7500 AE Enschede, The Netherlands\\
%%         {\tt\small h.kwakernaak@autsubmit.com}}
%%         \hspace*{ 0.5 in}
%%         \parbox{3 in}{ \centering Pradeep Misra**
%%         \thanks{**The footnote marks may be inserted manually}\\
%%        Department of Electrical Engineering \\
%%         Wright State University\\
%%         Dayton, OH 45435, USA\\
%%         {\tt\small pmisra@cs.wright.edu}}
%%}
%
%\author{Huibert Kwakernaak and Pradeep Misra% <-this % stops a space
%\thanks{This work was not supported by any organization}% <-this % stops a space
%\thanks{H. Kwakernaak is with Faculty of Electrical Engineering, Mathematics and Computer Science,
%        University of Twente, 7500 AE Enschede, The Netherlands
%        {\tt\small h.kwakernaak@autsubmit.com}}%
%\thanks{P. Misra is with the Department of Electrical Engineering, Wright State University,
%        Dayton, OH 45435, USA
%        {\tt\small pmisra@cs.wright.edu}}%
%}

	\title{Convergence Rate of Learning a Strongly Variationally Stable Equilibrium}
	\author{Tatiana Tatarenko, Maryam Kamgarpour  \IEEEmembership{Member, IEEE}
%		\thanks{This work was first submitted to arxiv on XXX}
\thanks{This work has been accepted for publication in the proceedings of the European Control Conference 2024.}
    \thanks{T. Tatarenko is with TU Darmstadt, Control Methods and Intelligent Systems Lab (e-mail: tatiana.tatarenko@tu-darmstadt.de).}
		\thanks{M. Kamgarpour  is with EPFL Sycamore Lab (e-mail: maryam.kamgarpour@epfl.ch). }}

\begin{document}

\maketitle
\thispagestyle{empty}
\pagestyle{empty}

%%%%%%%%%%%%%%%%%%%%%%%%%%%%%%%%%%%%%%%%%%%%%%%%%%%%%%%%%%%%%%%%%%%%%%%%%%%%%%%%
\begin{abstract}
	We derive the rate of convergence to the  strongly variationally stable Nash equilibrium in a convex game, for  a zeroth-order learning algorithm. Though we do not assume strong monotonicity of the game, our rates for the one-point feedback, $O\left(\frac{Nd}{t^{1/2}}\right)$, and for the two-point feedback, $O\left(\frac{N^2d^2}{t}\right)$,  match the best known rates for strongly monotone games \tat{under zeroth-order information}.
	\end{abstract}

\section{Introduction}
\label{sec:intro}

%MK IFAC ref, rates t, multi-point ref, strong VSS properties

% what's the problem about
%The problem learning  with zeroth-order information in a multi-agent game concerns with deriving an algorithm for each player  that uses only evaluations of the player's cost function.
Game-theoretic learning under zeroth-order information consists in deriving an algorithm for each player that uses only evaluations of the player's cost function.
%\footnote{The zeroth-order information is also known as payoff-based information in games or bandit information in online optimization.}.
This setting arises in applications in which each agent does not know the functional form of her objective or cannot readily compute its gradient, due to complex dependence on other players actions. For example, price functions in electricity markets depend on consumption/production of all agents in non-trivial way \cite{elmark}, travel times in a routing or transportation network depend on routes taken by other agents and the link capacities \cite{netrout}. In contrast, the agents can query the objectives at their joint chosen action, and obtain zeroth-order information  (cost function evaluations).  %road capacity, and utility of   \cite{drusvyatskiy2022improved, netrout, elmark, COVEr}. 

The works \cite{bravo2018bandit,tatarenko2019learning, tat_kam_TAC}  proposed zeroth-order learning algorithms  for games over continuous action sets. The underlying idea in the above algorithms is developing a randomized sampling technique to estimate gradients of players' cost functions using the zeroth-order information, and to then use the estimated gradient in a stochastic gradient descent scheme.  In general, convergence in zeroth-order learning is slow due to the high variance of the gradient estimators. Hence, it is relevant to establish  optimal rates of convergence for this class of problems. Our goal in this paper to estimate rate of convergence for zeroth-order learning algorithms in a specific class of games. 
%Whereas \cite{tatarenko2019learning,tat_kam_TAC} use the Gaussian distribution for randomized sampling (similar to \cite{Thatha, nesterov2011random}), \cite{bravo2018bandit} uses the uniform distribution (similar to \cite{spall1997one,flaxman2005online}). 
%Accordingly, the work \cite{tatarenko2019learning} generalizes a zeroth-order gradient descent procedure to games with coupling constraints. On the other hand,  \cite{bravo2018bandit} consideres a  general  class of algorithms known as mirror descent procedures. 

%The underlying assumptions in \cite{bravo2018bandit, tat_kam_TAC} for the proof of convergence of the  zeroth-order learning algorithms to the Nash equilibrium   have been the convexity of the game and the strict monotonicity of the game pseudo-gradient.  
While convergence rates in zeroth-order convex optimization have been well-explored, less work has been dedicated to deriving convergence rates for zeroth-order learning in convex games. Under  strong monotonicity assumption on the game pseudo-gradient, past work  derived a rate of $O\left(\frac{1}{t^{1/3}}\right)$  for the proposed learning algorithms  \cite[Theorem 5.2]{bravo2018bandit}, \cite[Theorem 3]{tat_kam_TAC}. Recently, \cite{drusvyatskiy2022improved}  demonstrated that the rate of  $O\left(\frac{1}{t^{1/3}}\right)$  for the algorithm proposed in  \cite{bravo2018bandit} is suboptimal. Indeed, a refined analysis technique of the same algorithm along with suitable choices of the stepsize and sampling distribution can ensure $O\left(\frac{1}{t^{1/2}}\right)$ \cite{drusvyatskiy2022improved}. Independently, \cite{arxivTatKam_Feb2022} showed that the rate derived in \cite[Theorem 3]{tat_kam_TAC} for their proposed algorithm can be improved to $O\left(\frac{1}{t^{1/2}}\right)$. The rate of $O\left(\frac{1}{t^{1/2}}\right)$ appears optimal  as it matches the lower bound for the class of zeroth-order strongly convex smooth optimization under one-point feedback (a subset of the class of strongly convex games considered in the works mentioned above) \cite{RechtNIPS2012}. 
Both \cite{drusvyatskiy2022improved,arxivTatKam_Feb2022}, require strong monotonicity of the game pseudo-gradient in their rate analysis. 
 
% variaionally stable
 A major recent  interest in learning equilibria in convex games is on relaxing the requirement of monotonicity on the game pseudo-gradient. \tat{To this end, some works have relaxed the assumption of strongly monotone pseudo-gradients and considered games with pseudo-gradients which are restricted strongly monotone with respect to a Nash equilibrium \cite{Franci21, Lan2022, TatNedShi21}. However, the convergence rate of zeroth-order learning algorithms for such games have not been addressed. } 
Further relaxing the monotonicity requirements, the work \cite{mertikopoulos2019learning}  considers the so-called  (local/global) \emph{variational stability} of an equilibrium. While the game monotonicity implies variational stability of the equilibria,  an equilibrium can be variationally stable (VS) even when the game psuedo-gradient is not (strongly) monotone or \tat{restricted strongly monotone}, see Examples 1 and 2\footnote{In contrast to strong monotonicity, which is a property of the game pseudo-gradient only,  the conditions of variational stability and restricted strong monotonicity entail properties of the pseudo-gradient \emph{and a  Nash equilibrium} of the game. Accordingly, to establish these latter properties theoretically, one requires knowledge regarding the equilibrium point. This is a  trade-off allowing for establishing convergence and its rate in games with non-(strongly) monotone pseudo-gradients.}. 

It has been shown that the existence of a global strongly VS Nash equilibrium is sufficient for convergence of the first-order learning algorithms proposed in \cite{bravo2018bandit,mertikopoulos2019learning}. Given stochastic first-order information,  \cite{mertikopoulos2019learning}  derived a convergence rate to the strongly VS Nash equilibrium. This rate was in terms of ergodic average of the sequence played actions, a weaker notion of convergence than the  last iterate of the played actions derived in \cite{bravo2018bandit,tat_kam_TAC,drusvyatskiy2022improved,arxivTatKam_Feb2022}. Relaxing from strong to mere variational stability of an equilibria, the work  in \cite{gao2021second}  proposed an algorithm that converges to an interior mere VS equilibrium of a convex game under exact first-order feedback, i.e. knowledge of game pseudo-gradient, and characterized its convergence rate. Building on this, \cite{gao2022bandit} addressed learning of a mere VS equilibrium with zeroth-order information. However, convergence rates were not established in this work. 
%For merely VS interior equilibria,  \cite{gao2021second} derived a convergence rate under  exact first-order information. 

Summarizing the above, the problem of characterizing the  rate of convergence of the iterates to the strongly variationally stable equilibrium under zeroth-order feedback  to our knowledge was not addressed. Addressing this gap, our  contributions are as follows. 
\begin{itemize}
\item  We derive the convergence rate of the zeroth-order gradient play to the strongly VS Nash equilibrium of a convex game using a one-point feedback as $O\left(\frac{1}{t^{1/2}}\right)$. This appears to be the best rate since it meets the known best bound in a subclass containing strongly monotone games given the same information setting \cite{drusvyatskiy2022improved, RechtNIPS2012, arxivTatKam_Feb2022}. 
\item We consider a two-point zeroth-order feedback model, motivated by the rate improvement achieved with the  two-point feedback model in the zeroth-order optimization literature~\cite{Duchi2015}. By adapting our randomized gradient estimation approach, we also improve the rate of convergence to the order of $O\left(\frac{1}{t}\right)$. 
\end{itemize}
%While our algorithm is similar to \cite{bravo2018bandit,tat_kam_TAC}, the reason for being able to establish the stronger result compared to these past works is our new analysis technique. In particular, due to the lack of strong monotonicity, we develop a new  approach to estimate the distance between the algorithm iterates and the Nash equilibrium. This approach is based on proving that the Nash equilibrium $\ab^*$ stays ``almost" strongly variationally stable with respect to the pseudo-gradient in the \emph{mixed strategies},  a property we establish in Proposition~\ref{prop:strMon}.

The rest of the paper is organized as follows. In Section~\ref{sec:problem} we formulate the problem of payoff-based learning and state the assumptions on the considered class of games. In Section~\ref{sec:procedure}, we detail the proposed payoff-based approach in the one-point and two-point setting for gradient estimations. In Section~\ref{sec:rate} we state the main result on the convergence rate of the proposed algorithm and provide its proof. Section~\ref{sec:simulation} presents a simulation of the procedure under consideration. In Section~\ref{sec:conclusion} we conclude the paper.

\textbf{Notations.}
The set $\{1,\ldots,N\}$ is denoted by $[N]$. We consider real normed space $\R^d$. The column vector $\xb\in\R^d$ is denoted by $\xb = (x^1,\ldots, x^d)$. We use superscripts to denoted coordinates of vectors and the player-related functions and sets. We use the subscript $j$ which takes values $j\in\{1,2\}$ to differentiate between particular terms in the proposed algorithm for one and two-point feedback models, respectively.
For any function $f:K\to\R$, $K\subseteq\R^d$, $\nabla_{x^i} f(\xb) = \frac{\partial f(\xb)}{\partial x^i}$ is the partial derivative taken in respect to the $x^i$th variable (coordinate) in the vector argument $\xb\in\R^d$.
 We use $\langle \cdot,\cdot\rangle$ to denote the inner product in $\R^d$.
We use $\|\cdot\|$ to denote the Euclidean norm induced by the standard dot product in $\R^d$. 
A mapping $g:\R^d\to \R^d$ is said to be \emph{strongly monotone} on $Q\subseteq \R^d$ with the constant $\eta$, if for any $u, v\in Q$, $\langle g(u)-g(v), u - v\rangle \ge \eta\|u - v\|^2$; \emph{strictly monotone}, if the strict inequality holds for $\eta=0$ and $u\neq v$, and \emph{merely monotone} if the inequality holds for $\eta=0$.
We use $\Proj_{\Omega}{v}$ to denote the projection of $v\in E$ to a set $\Omega\subseteq E$.
The mathematical expectation of a random value $\xi$ is denoted by $\E\{\xi\}$. Its conditional expectation in respect to some $\sigma$-algebra $\EuScript F$ is denoted by $\E\{\xi|\EuScript F\}$.
We use the big-$O$ notation, that is, the function $f(x): \R\to\R$ is $O(g(x))$ as $x\to a$ for some $a\in\R$, i.e. $f(x)$ = $O(g(x))$ as $x\to a$, if $\lim_{x\to a}\frac{|f(x)|}{|g(x)|}\le K$ for some positive constant $K$. We use the little-$o$ notation, that is, the function $f(x): \R\to\R$ is $o(g(x))$ as $x\to a$ for some $a\in\R$, i.e. $f(x)$ = $o(g(x))$ as $x\to a$, if $\lim_{x\to a}\frac{|f(x)|}{|g(x)|} = 0$.

\section{Game setup and \tat{Zeroth-Order} Algorithm}
\label{sec:problem}
\allowdisplaybreaks
Consider a game $\Gamma = \Gamma (N, \{A^i\}, \{J^i\})$ with $N$ players, the sets of players' actions $A^i\subseteq \R^d$, $i\in[N]$, and the cost (objective) functions $J^i:\Ab\to\R$, where $\Ab = A^1\times\ldots\times A^N$ denotes the set of joint actions\footnote{For notation simplicity, we assume the dimension of each action set to be $d$. The algorithm and analysis readily generalize to the case of different dimensions $d^i$, $i\in[N]$.}. Thus, each joint action is a vector $\ba = (\ba^1,\ldots,\ba^N)\in\Ab\subseteq \R^{Nd}$, where $\ba^i = (a^{i,1},\ldots,a^{i,d})\in\R^d$. We  use the notation $\ba = (\ba^i,\ba^{-i})$, where $\ba^{-i}$ is actions of players not including player $i$.

\begin{definition}\label{def:NE}
	A vector $\ba^* = (\ba^{*1},\ldots,\ba^{*N})\in\Ab$ is called a \emph{Nash equilibrium} if for any $i\in[N]$ and $\ba^i\in A^i$
	\[J^i(\ba^{i*},\ba^{-i*})\le J^i(\ba^{i},\ba^{-i*}).\]
\end{definition}
We restrict the class of games as follows.
%We assume the following regarding  $\Gamma (N, \{A^i\}, \{J^i\})$.
\begin{assumption}\label{assum:convex}
	The game under consideration is \emph{convex}. Namely, for all $i\in[N]$ the set $A^i$ is convex and closed, the cost function $J^i(\ba^i, \ba^{-i})$ is defined on $\R^{Nd}$, continuously differentiable in $\ba$ and convex in $\ba^i$ for  fixed $\ba^{-i}$.
\end{assumption}

\begin{assumption}\label{assum:compact}
	The action sets $A^i$, $i \in [N]$, are compact.
	%than a polynomial function as $\|\ba\|\to\infty$.
\end{assumption}
Note that Assumptions~\ref{assum:convex} and~\ref{assum:compact} together imply the existence of a Nash equilibrium in the game $\Gamma$ \cite{FaccPang2}. In a convex game, the Nash equilibrium can be characterized through the so-called \emph{pseudo-gradient} of the game defined below.

\begin{definition}\label{def:pg}
	The mapping $\Mb:\R^{Nd}\to\R^{Nd}$, referred to as the \emph{pseudo-gradient} of the game $\Gamma (N, \{A^i\}, \{J^i\})$, is defined by
	\begin{align*}
		\nonumber
		\Mb(\ba) &= (\nabla_{\ba^i} J^i(\ba^i, \ba^{-i}))_{i=1}^N=(\Mb^1(\ba), \ldots, \Mb^N(\ba))^{\top},\\ \nonumber
		&\mbox{where }\Mb^i(\ba) = (M^{i,1}(\ba), \ldots, M^{i,d}(\ba))^{\top}, \\
		\nonumber
		&\quad M^{i,k}(\ba)= \frac{\partial J^i(\ba)}{\partial a^{i,k}},\quad \ba\in\Ab, \quad i\in[N], \quad k\in[d].
	\end{align*}	
\end{definition}
In a convex game, i.e. under Assumption~\ref{assum:convex}, $\ab^*$ is a Nash equilibrium if and only if $\langle\Mb(\ab^*),\ab-\ab^*\rangle \geq 0, \; \forall \ab \in \Ab$ (see, for example, \cite{FaccPang2}). However, this characterization alone is not sufficient to ensure convergence of learning algorithms using the idea of a (stochastic) gradient descent approach (called also gradient play) in convex games.%\footnote{Gradient play here refers to the standard gradient descent algorithm in optimization adapted to the game setting.}.
In particular, in most past work on learning algorithms certain structural assumptions on the game pseudo-gradient, such as strong/strict monotonicity, or assumptions on the Nash equilibrium such as variational stability, are required to prove convergence of algorithms. 
%It is known that the (strong/strict) monotonicity condition is stronger than the (strong/strict) variational stability  of the Nash equilibrium. Hence,  here  we do not restrict the game to a monotone one, rather we consider  learning for a game which posses a globally \emph{strongly variationally stable} Nash equilibrium as defined below\footnote{We call a game monotone, strongly or strictly monotone, if its pseudo-gradient is monotone, strongly or strictly monotone respectively.}. 

\begin{definition}\label{def:VS}
	A Nash equilibrium $\ab^*$ 
	is globally \emph{$\nu$-strongly variationally stable (SVS)}, if $\langle\Mb(\ab),\ab - \ab^*\rangle \ge \nu\|\ab - \ab^*\|^2$ for any $\ab\in\Ab$ and some $\nu>0$. 
\end{definition}
In the definition above, if the inequality holds with $\nu = 0$, then the Nash equilibrium at $\ab^*$ is referred to as globally merely variationally stable. On the other hand, if the above properties hold only on a neighborhood $\mathbf{D} \subset \Ab$, then the Nash equilibrium is locally (strongly/merely) VS. 

\begin{assumption}\label{assum:CG_grad}
	The Nash equilibrium in $\Gamma$ is globally $\nu$-strongly variationally stable with the constant $\nu$.
\end{assumption}
If a game has a strongly variationally stable (SVS) Nash equilibrium, then the Nash equilibrium is unique \cite[Proposition 2.5]{mertikopoulos2019learning}. Furthermore,  if a game has a strongly monotone pseudo-gradient, then its unique Nash equilibrium is strongly variationally stable. However, the converse statement is not true. The example below illustrates these definitions. 

\begin{example}\label{example:3player}
Consider a 3-player game, where each player's action set is $[-1,2] \subset \R$. The cost of player $i$, for $i\in\{1,2,3\}$ is $J^i(a^1,a^2,a^3) = a^1a^2a^3 + (a^i)^2$. Hence, the game pseudo-gradient is given by $\Mb(\ab) = (a^2a^3+2a^1, a^1a^3+2a^2, a^1a^2+2a^3)^{\top}$. It can be verified that there exists a Nash equilibrium at $\ab^*=(0,0,0)$, since $\Mb(\ab^*)=0$. Furthermore, this Nash equilibrium is globally strongly VS with $\nu = 1/2$, since $\langle\Mb(\ab),\ab-\ab^*\rangle = 3a^1a^2a^3 + 2\sum_{i=1}^3 (a^i)^2 \geq \frac{\| \ab-\ab^*\|^2}{2}$ for any $\ab\in \Ab = [-1,2]^3$. Notice that the game is not monotone since the Jacobian of the $\Mb(\ab)$, given as {\small $$\nabla \Mb(\ab)=\begin{pmatrix}
2 & a^3 & a^2\\
a^3 & 2 & a^1 \\
a^2 & a^1 & 2 \\
\end{pmatrix}$$}has a negative eigenvalue for $\ab = (2,1,2)$.  Furthermore, the restriction of the game to the action set $[-1,0]$ results in the same unique globally stable Nash equilibrium, but this time, this equilibrium will be on the boundary. %In contrast to the work~\cite{gao2022bandit} studying merely VS equilibria from interior,  in this paper, we are also able to deal with the cases of boundary Nash equilibria. However, to provide the convergence rate analysis, we assume strong variational stability of solutions.

\end{example}
For further examples of games exhibiting variationally stable equilibria, please, see examples in \cite{mertikopoulos2019learning,gao2022bandit}. For examples of games from  telecommunication or adversarial learning domains with VS Nash equilibria, please, see \cite{gao2022bandit}.

\tat{\begin{rem} Recent works have addressed convergent procedures in games with \emph{restricted strongly monotone} pseudo-gradients~\cite{Franci21, Lan2022, TatNedShi21}. This property is formulated as follows: The pseudo-gradient $\Mb$ is called restricted strongly monotone in the game $\Gamma$ possessing a Nash equilibrium $\ab^*$, if $\langle\Mb(\ab)-\Mb(\ab^*), \ab-\ab^*\rangle\ge \nu \|\ab-\ab^*\|^2$ for some $\nu>0$ and any $\ab\in\Ab$. If some game satisfies this condition, the Nash equilibrium is unique \cite{TatNedShi21}. It can be seen that, due to the inequality $\langle\Mb(\ab^*), \ab-\ab^*\rangle\ge 0$ holding for any $\ab\in\Ab$, the Nash equilibrium in a game with a restricted strongly monotone pseudo-gradient is necessarily strongly variationally stable. However, as Example~\ref{example:2player} below demonstrates, existence of a strongly variationally stable Nash equilibrium does not imply that the pseudo-gradient is restricted strongly monotone. Thus, games with {restricted strongly monotone} pseudo-gradients are a subclass of games with strongly variationally stable Nash equilibria considered in this paper.
\end{rem}
}

\tat{\begin{example}\label{example:2player}
Consider a 2-player game, where each player's action set is $[0,1] \subset \R$. The cost of each player $i$, for $i\in\{1,2\}$, is $J^i(a^1,a^2) =\frac{1}{4}((a^1)^2 + (a^2)^2) - \frac{1}{4}a^1a^2 + 2\sqrt{1+a^1} +  2\sqrt{1+a^2}$. The unique minimizer of the equal cost functions and, thus, the unique Nash equilibrium of the game, is $\ab^*=(0,0)$. The game pseudo-gradient is given by $\Mb(\ab) = (\frac{1}{2} a^1 -\frac{1}{4}a^2+ \frac{1}{\sqrt{1+a^1}}, \frac{1}{2} a^2 -\frac{1}{4}a^1 + \frac{1}{\sqrt{1+a^2}})^{\top}$. It can be verified that $\langle\Mb(\ab), \ab-\ab^*\rangle \ge \nu\|\ab-\ab^*\|^2$ with $\nu=\frac{1}{4}$. Thus, the game possesses the unique globally strongly VS Nash equilibrium. However, the pseudo-gradient of the game is not restricted strongly monotone, as for $\ab = (1,1)\in \Ab$
\begin{align*}
    \langle\Mb(\ab)-\Mb(\ab^*), \ab-\ab^*\rangle &= \frac{1}{2}(a^1)^2 + \frac{1}{2}(a^2)^2 - \frac{1}{2}a^1a^2\cr
    &\,+ \left(\frac{a^1}{\sqrt{1+a^1}} + \frac{a^2}{\sqrt{1+a^2}}\right)\cr
    &\,- a^1- \left.a^2\right|_{\ab = (1,1)}<0.
  \end{align*}
%since $\nu<\frac{1}{2}-\frac{1}{2\sqrt{2}}$.
\end{example}}

%{In summary, we have the following statement where the inclusions are strict: games with strongly monotone pseudo-gradients $\subset$ games with restricted strongly monotone pseudo-gradients (in respect to Nash equilibria) $\subset$ games with strongly VS Nash eqiulibria.}

\section{Payoff-based Learning algorithm}\label{sec:procedure}
The steps of the procedure run by each player are summarized in Algorithm \ref{alg:algorithm1}. In particular, the one-point approach has already been proposed in \cite{tat_kam_TAC, arxivTatKam_Feb2022}. However, its convergence properties established in the above works  was only for the case of strongly monotone games. 
%\section{Proposed Optimization Algorithm}
\setlength{\textfloatsep}{15pt}
\begin{algorithm}[t!]
	\caption{One-point and two-point zeroth-order  algorithm for learning  Nash equilibria }\label{alg:algorithm1}
	\begin{algorithmic}[!t]
		\Require Action set $\Ab^i \subset \R^d$, the sequences $\{\sigma_t\},  \{\gamma_t\}$, initial state $\bmu^i(0)$.
		\For {$t = 0,1, \ldots$}
		\State  Sample a query point $\bxi^i(t)$ according to probability density \eqref{eq:density}.
		%\State Obtain the feasible action $\ab^i(t) = \Proj_{\Ab^i}[\bxi^i(t)]$.
		
		\noindent{\color{blue} { \footnotesize\ttfamily  /* Simultaneously and similarly,  other players sample their  query points $\bxi^{-i}(t)$.   /* }}
%		\State Observe $ J^i(t) = J^i(\bxi^1(t),\ldots,\bxi^N(t))$.
		\State One-point scheme: Perform the one-point gradient estimate according to \eqref{eq:est_Gd}.		\State Two-point scheme:  Observe additional  ${J^i}^0(t) = J^i(\bmu^1(t),\ldots,\bmu^N(t))$ and perform the two-point gradient estimate according to \eqref{eq:est_Gd2}.
		\State Update the state according to \eqref{eq:alg}.		
		\noindent{\color{blue} { \footnotesize\ttfamily  /* Simultaneously and similarly,  other players update their states $\bmu^{-i}(t+1)$.  /* }}
		\EndFor
		
	\end{algorithmic}
\end{algorithm}

\subsubsection{Algorithm iterates}
Let us denote by $\mb^i_j$, $j\in\{1,2\}$, some estimate of $\Mb^i$ in the pseudo-gradient of the game (see Definition \ref{def:pg}). Here, $j=1$ denotes the one-point and $j=2$ denotes the two-point procedure estimate, respectively, and will be detailed in the next subsection. The proposed method  to update  player $i$'s  so-called state $\bmu^i$ is as follows:
\begin{align}
	\label{eq:alg}
	\bmu^i(t+1)=\Proj_{A^i}[\bmu^i(t)-\gamma_t\mb^i_j(t)],
\end{align}
where $\bmu^i(0)\in \R^{Nd}$ is an arbitrary finite value and $\g_t$ is the step size or the learning rate. The step size $\gamma_t$ needs to be chosen based on the bias and variance of the pseudo-gradient estimates $\mb^i_j$. The term $\mb^i_j(t)$, $j\in\{1,2\}$, is obtained using the payoff-based feedback as described below.

\begin{rem}
While our algorithm is similar to \cite{bravo2018bandit,tat_kam_TAC}, the reason for being able to establish the stronger result compared to these past works is our new analysis technique. In particular, due to the lack of strong monotonicity, we develop a new  approach to estimate the distance between the algorithm iterates and the Nash equilibrium. This approach is based on proving that the Nash equilibrium $\ab^*$ stays ``almost" strongly variationally stable with respect to the pseudo-gradient in the \emph{mixed strategies},  a property we establish in Proposition~\ref{prop:strMon}.
\end{rem}

\subsubsection{Gradient estimation in one and two-point settings}
We estimate the unknown gradients using the randomizing sampling technique. In particular, we use the Gaussian distribution for sampling inspired by \cite{Thatha,NesterovSpokoiny}. Since this distribution has an unbounded support, we need the following assumption on the cost functions' behavior at infinity.
\begin{assumption}	\label{assum:infty}
	Each function $J^i(\bx) = O(\exp\{\|\bx\|^{\alpha}\})$ as $\|\bx\|\to\infty$, where $\alpha<2$.
\end{assumption}

Given $\bmu^i(t)$, let  player $i$ sample the random vector $\bxi^i(t)$ according to the multidimensional normal distribution $\EuScript N(\bmu^i(t)=(\mu^{i,1}(t),\ldots,\mu^{i,d}(t))^{\top},\sigma_t)$ with the following density function:
\begin{align}\label{eq:density}
	p^i&(\bx^i;\bmu^i(t),\sigma_{t})= \frac{1}{(\sqrt{2\pi}\sigma_{t})^{d}}\exp\left\{-\sum_{k=1}^{d}\frac{(x^{i,k}-\mu^{i,k}(t))^2}{2\sigma^2_{t}}\right\}.
\end{align}
According to the algorithm's setting, the cost value at the query point $\bxi(t)=(\bxi^1(t),\ldots,\bxi^N(t))\in \R^{Nd}$,  denoted by $J^i(t): =J^i(\bxi(t))$, is revealed to each player $i$. In the one-point setting, player $i$ then estimates her local gradient $\frac{\partial J^i}{\partial \bmu^i}$ evaluated at the point of the joint state $\bmu(t)=(\bmu^1(t),\ldots,\bmu^N(t))$ as follows:
\begin{align}\label{eq:est_Gd}
	\mb^i_1(t) = { J^i(t)}\frac{{\bxi^i(t)} -\bmu^i(t)}{\sigma^2_t}.
\end{align}
% {\color{green}Note that the above query point $\bxi(t)$ is not necessarily feasible (does not belong to $A_i$).  To ensure feasibility,  player $i$ can query the function at $\Proj_{\Ab}\bxi(t)$. For simplicity in analysis and notation, here we do not perform this additional projection and refer the reader to \cite{arxivTatKam_Feb2022} for the similar development in case of learning in monotone games. }

In the two point setting at each iteration $t$, each player $i$ makes two queries: a query corresponding to playing randomly chosen  $\bxi^i(t)$, $i\in[N]$; and another query of the cost function at $\bmu^i(t)$, $i\in[N]$. Hence, there is an extra piece of information available to each player, namely the cost function value at the mean (state) vector $\bmu(t)$: ${J^i}^0(t): = J^i(\bmu(t)) $. Then each player uses the following estimation of the local gradient $\frac{\partial J^i}{\partial \bmu^i}$ at the point  $\bmu(t)$:
	\begin{align}\label{eq:est_Gd2}
		\mb^i_2(t) = (J^i(t) - {J^i}^0(t))\frac{{\bxi^i(t)} -\bmu^i(t)}{\sigma^2_t}.
	\end{align}

% {\color{green} Observe that the estimation $\mb^i_2(t)$ above can be performed on the feasible set $\Ab$. One can set $\mb^i_2(t) = (J^i(\Proj_{\Ab}\bxi(t)) - J^i(\bmu(t))\frac{{\bxi^i(t)} -\bmu^i(t)}{\sigma^2_t}$, using, thus, cost values at the joint action set, namely at the points $\Proj_{\Ab}\bxi(t)),\bmu(t)\in\Ab$. However, to guarantee the same convergence rate as stated in the main result (see Theorem~\ref{th:main}), an adjust of the updates in~\eqref{eq:alg} is required. An extra parameter needs to be introduced to the procedure and has to be balanced with both the step size $\gamma_t$ and the variance $\sigma_t$. This will significantly complicate the analysis and so far we are unable to establish the convergence rates achieved here. For more details, we refer the readers to the extended version of this paper~\cite{SVSNE_arxiv}.   }
\begin{rem}
Observe that the both estimations $\mb^i_j(t)$, $j=1,2$, above can be performed on the feasible set $\Ab$. One can set $\mb^i_1(t) = J^i(\Proj_{\Ab}\bxi(t))\frac{{\bxi^i(t)} -\bmu^i(t)}{\sigma^2_t}$ $\mb^i_2(t) = (J^i(\Proj_{\Ab}\bxi(t)) - J^i(\bmu(t))\frac{{\bxi^i(t)} -\bmu^i(t)}{\sigma^2_t}$, using, thus, cost values at feasible actions, namely at the points $\Proj_{\Ab}\bxi(t)$ and $\bmu(t)\in\Ab$. However, to guarantee convergence of the algorithm to the Nash equilibrium, an adjust of the updates in~\eqref{eq:alg} is required. An extra parameter needs to be introduced to project the $\bmu(t)$'s on a shrunk set and this parameter has to be balanced with both the step size $\gamma_t$ and the variance $\sigma_t$, see  \cite{arxivTatKam_Feb2022} for details of this analysis in the case of monotone games, and \cite{flaxman2005online} for similar consideration in zeroth-order online optimization. 
%To avoid the additional complexitiies and notations arising for this consideration, in this w.
Lastly, note that without feasible queries, the approach can be thought of as offline learning rather than online approach. 
\end{rem}

% This is in contrast to the two-point estimations using the uniform distribution over the unit sphere for gradient estimations. In that case, to obtain the second point for the estimation, each player needs to unilaterally deviate from her action, whereas the actions of other players have to be fixed \cite{agarwal2010}. }

\subsubsection{Properties of the  gradient estimators}
We provide insight into the procedure defined by Equation \eqref{eq:alg}  by deriving an analogy to a  stochastic gradient algorithm. Denote 
\begin{align}\label{eq:densityfull}
p( \bx; \bmu, \sigma)=\prod_{i=1}^Np^i(x^{i,1},\ldots,x^{i,d};\bmu^i,\sigma)
\end{align}
as the  density function of the joint distribution of players' query points $\bxi$, given some state $\bmu =(\bmu^1,\ldots,\bmu^N)$. For any $\sigma > 0$ and $i\in[N]$ define $ \tilde{J}_i : \R^{Nd} \rightarrow \R$ as
\begin{align}
\label{eq:mixedJ}
\tilde{J}_i &(\bmu, \sigma)= \int_{\mathbb R^{Nd}}J^i(\bx)p( \bx; \bmu, \sigma)d\bx.
\end{align}
Thus, $\tilde{J}_i$, $i\in[N]$, is the $i$th player's cost function in the mixed strategies, where the strategies are sampled from the Gaussian distribution with the density function in~\eqref{eq:densityfull}.
%We can now show that the second term inside the projection in \eqref{eq:regpl} is a sample of the gradient of this cost function $\tilde{J}_i$ with respect to the mixed strategies.
For $i\in[N]$ define $\tilde{\Mb}^{i,\sigma} (\cdot)=(\tilde M^{i,1,\sigma}(\cdot), \ldots, \tilde M^{i,d,\sigma}(\cdot))^{\top}$
as the $d$-dimensional mapping with the following elements:
\begin{align}\label{eq:mapp2}
\tilde M^{i,k,\sigma} (\bmu)=\frac{\partial {\tilde J^i(\bmu, \sigma)}}{\partial \mu^{i,k}}, \mbox{ for $k\in[d]$}.
\end{align}
Furthermore, let   $\Rb_{j}^{i}$, $j=1,2$, denote:
\begin{align}
%&\Pb_i(\bxi(t),\bmu(t),\sigma_t) = \frac{\bxi^i(t) -\bmu^i(t)}{\sigma^2_t}(J^i(\ba(t))-J^i(\bxi(t)))\label{eq:Pterm},\\
&\Rb_{j}^{i}(\bxi(t),\bmu(t),\sigma_t) = {\mb}_j^i(t) - \tilde{\Mb}^{i,(t)} (\bmu(t),\sigma_t), \, \mbox{where}\label{eq:Rterm} \cr
&{\mb}_j^i(t) =\begin{cases}
	J^i(\bxi(t) )\frac{\bxi^i(t) -\bmu^i(t)}{\sigma^2_t}, \, \mbox{if $j=1$},\\
	(J^i(\bxi(t)) - J^i(\bmu(t)) )\frac{\bxi^i(t) -\bmu^i(t)}{\sigma^2_t}, \mbox{ if $j=2$},
	\end{cases}
\end{align}
where, to simplify notations, we used $\tilde{\Mb}^{(t)} = \tilde{\Mb}^{\sigma_t}$.
With the above definitions, the update rule \eqref{eq:alg}  is equivalent to:
\begin{align}
\label{eq:pbavmu}
&\bmu^i(t+1) =\Proj_{A^i}[\bmu^i(t) -\gamma_t\big(\tilde\Mb^{i,(t)}(\bmu(t))\cr &\qquad+\Rb_{j}^{i}(\bxi(t),\bmu(t),\sigma_t)\big)].
\end{align}
Recall that the cases $j=1$ and $j=2$ above correspond to the one-point and two-point gradient estimators, respectively.

We now show that $\tilde\Mb^{i,(t)}$ is equal to $\Mb^i$ in expectation and the term $\Rb_{j}^{i}$ has a zero-mean. Thus,  we can interpret   \eqref{eq:alg} as a stochastic gradient descent procedure.

Let $\EuScript F_{t}$ be the $\sigma$-algebra generated by the random variables $\{\bmu(k),\bxi(k)\}_{k\le t}$. First, we demonstrate in the next lemma that the mapping $\tilde{\Mb}^{(t)} = (\tilde{\Mb}^{1,(t)},\ldots,\tilde{\Mb}^{N,(t)})$ evaluated at $\bmu(t)$ is equivalent to the  pseudo-gradient in mixed strategies, that is,
\begin{align}\label{eq:gradmix}
\tilde{\Mb}^{i,(t)} (&\bmu(t))=\int_{\mathbb R^{Nd}}{\Mb^i} (\bx)p(\bx;\bmu(t),\sigma_t)d\bx.
\end{align}
Moreover, this lemma proves that  the conditional expectation of the terms defined in \eqref{eq:est_Gd} and \eqref{eq:est_Gd2}, namely,  \[{\mb}^i(t) = ({m}_j^{i,1}(t), \ldots, {m}_j^{i,d}(t))\in\R^d, \quad j = 1, 2,\] is equal to $\tilde{\Mb}_i$. 
\begin{lem}\label{lem:sample_grad}
Given Assumptions\r\ref{assum:convex} and~\ref{assum:infty}, for $j=1,2$,
\begin{align}\label{eq:deriv}
	&\tilde M^{i,k,(t)} (\bmu(t))\cr&=\E\{M^{i,k}(\bxi^1,\ldots,\bxi^N)|\xi^{i,k}\sim \EuScript N(\mu^{i,k}(t),\sigma_t), i\in[N], k\in[d]\} \cr
	&=\E\{{m}_j^{i,k}(t)|\EuScript F_t\}.
\end{align}
\end{lem}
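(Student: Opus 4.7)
The plan is to establish both equalities separately, starting with the first one, which identifies $\tilde{M}^{(t)}_{i,k}(\bmu(t))$ with the expected partial derivative of $J_i$ under the Gaussian joint distribution.

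\textbf{First equality.} I would rewrite the definition \eqref{eq:mixedJ} using the change of variables $\bx = \bmu + \sigma_t \bz$, where $\bz$ is a standard Gaussian vector in $\R^{Nd}$:
\[
\tilde J_i(\bmu, \sigma_t) = \int_{\R^{Nd}} J_i(\bmu + \sigma_t \bz)\, \phi(\bz)\, d\bz,
\]
where $\phi$ is the standard Gaussian density. Differentiating in $\mu^i_k$ and interchanging derivative and integral gives
\[
\tilde M^{(t)}_{i,k}(\bmu) = \int_{\R^{Nd}} \frac{\partial J_i}{\partial a^i_k}(\bmu + \sigma_t \bz)\, \phi(\bz)\, d\bz = \E\{M_{i,k}(\bxi)\},
\]
where $\bxi$ has the density \eqref{eq:densityfull}. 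Evaluating at $\bmu = \bmu(t)$ and using that the conditional law of $\bxi(t)$ given $\EuScript F_t$ is exactly this Gaussian yields \eqref{eq:gradmix} and the first line of \eqref{eq:deriv}. The interchange of differentiation and integration here is precisely where Assumption \ref{assum:infty} enters: the sub-Gaussian growth of $J_i$ (with $\alpha<2$) is dominated by the Gaussian tails of $\phi$, so dominated convergence applies both to the integrand and its partial derivative on a small neighborhood of $\bmu$, together with continuity of $M_{i,k}$ from Assumption \ref{assum:convex}.

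\textbf{Second equality, one-point case ($j=1$).} I would instead differentiate under the integral in the original expression \eqref{eq:mixedJ}, moving the $\mu^i_k$-derivative onto the density:
\[
\tilde M^{(t)}_{i,k}(\bmu) = \int J_i(\bx)\, \frac{\partial p(\bx;\bmu,\sigma_t)}{\partial \mu^i_k}\, d\bx.
\]
A direct computation from \eqref{eq:density}–\eqref{eq:densityfull} gives $\partial p / \partial \mu^i_k = \frac{x^i_k-\mu^i_k}{\sigma_t^2}\, p$, hence
\[
\tilde M^{(t)}_{i,k}(\bmu(t)) = \E\!\left\{ J_i(\bxi(t))\,\frac{\xi^i_k(t) - \mu^i_k(t)}{\sigma_t^2}\,\Big|\,\EuScript F_t\right\} = \E\{m_1^{i,k}(t)\mid\EuScript F_t\},
\]
by definition \eqref{eq:est_Gd}. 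The same integrability argument from Assumption \ref{assum:infty} justifies the interchange. (Equivalently, one can obtain the identity by an integration-by-parts argument on $\int M_{i,k}(\bx) p(\bx;\bmu,\sigma_t) d\bx$, noting that the boundary terms at infinity vanish thanks to the Gaussian decay beating the $\exp\{\|\bx\|^\alpha\}$ growth with $\alpha<2$.)

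\textbf{Second equality, two-point case ($j=2$).} Writing
\[
m_2^{i,k}(t) = m_1^{i,k}(t) - J_i(\bmu(t))\,\frac{\xi^i_k(t)-\mu^i_k(t)}{\sigma_t^2},
\]
the quantity $J_i(\bmu(t))$ is $\EuScript F_t$-measurable, and the conditional mean of $\xi^i_k(t)-\mu^i_k(t)$ given $\EuScript F_t$ is zero by \eqref{eq:density}. Therefore the subtracted term has zero conditional expectation, and $\E\{m_2^{i,k}(t)\mid\EuScript F_t\} = \E\{m_1^{i,k}(t)\mid\EuScript F_t\} = \tilde M^{(t)}_{i,k}(\bmu(t))$.

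The main technical obstacle is the rigorous justification of differentiation under the integral sign in both steps; here the only non-trivial input is Assumption \ref{assum:infty}, which ensures a dominating integrable function uniformly on a neighborhood of $\bmu(t)$ (using that $\exp\{\|\bx\|^\alpha - \|\bx-\bmu\|^2/(2\sigma_t^2)\}$ is integrable for $\alpha<2$). Beyond that, the argument reduces to a direct computation of $\partial p/\partial \mu^i_k$ and the elementary observation that $\xi^i_k(t)-\mu^i_k(t)$ is conditionally centered.
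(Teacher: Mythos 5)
Your proof is correct and follows essentially the same route as the paper, which itself defers to the standard Gaussian-smoothing argument of Lemma 1 in the cited references: differentiating under the integral (justified by Assumption~\ref{assum:infty} dominating the sub-Gaussian growth of $J_i$), the identity $\partial p/\partial \mu^i_k = \frac{x^i_k-\mu^i_k}{\sigma_t^2}\,p$, and the observation that the extra two-point term is conditionally centered because $J_i(\bmu(t))$ is measurable with respect to the conditioning $\sigma$-algebra. No gaps.
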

The proof of this result is very similar to that of Lemma 1 in \cite{tatarenko2019learning} for the one-point feedback, and its extension to \cite{arxivTatKam_Feb2022} for two-point feasible feedback. 
%More details can be found in~\cite{arxivTatKam_Feb2022} (see Lemma 1 therein). Lemma~\ref{lem:sample_grad} above implies that for $j=1,2$
\begin{align}
\label{eq:mathexp2}
\Rb_{j}^{i}(\bxi(t),\bmu(t),\sigma_t) = {\mb}_j^i(t) - \E\{{\mb}_j^i(t)|\EuScript F_t\},\; \;i\in[N].
\end{align}
For the sake of notation simplicity, let us use $\Rb(t) = \Rb(\bxi(t), \bmu(t),\sigma_t)$.
Our second lemma below characterizes the variance of the term  $\Rb(t)$.
\begin{lem}\label{lem:Rsq}
Under Assumptions~\ref{assum:convex} and~\ref{assum:infty}, as $\s_t\to 0$, for $j=1,2$
\[\E\{\|\Rb_{j}^{i}(t)\|^2 | \EuScript F_t\} = \begin{cases}
	O\left(\frac{d}{\sigma_t^2}\right), \, &\mbox{ if $j=1$},\\
	O(Nd^2). \, &\mbox{ if $j=2$}.
\end{cases}\]
\end{lem}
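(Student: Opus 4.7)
The plan is to reduce the variance bound to a second-moment bound. By Lemma~\ref{lem:sample_grad} and \eqref{eq:mathexp2}, $\Rb_{i,j}(t) = \mb_j^i(t) - \E\{\mb_j^i(t)|\EuScript F_t\}$ has zero conditional mean, so $\E\{\|\Rb_{i,j}(t)\|^2|\EuScript F_t\} \le \E\{\|\mb_j^i(t)\|^2|\EuScript F_t\}$. After the change of variables $\bxi(t) = \bmu(t) + \sigma_t \zb$, with $\zb=(\zb^1,\ldots,\zb^N)$ standard normal in $\R^{Nd}$, both estimators become explicit functions of $\zb$, so it suffices to bound the corresponding expectations.

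For $j=1$, Cauchy--Schwarz gives
\[\E\{\|\mb_1^i(t)\|^2|\EuScript F_t\} = \tfrac{1}{\sigma_t^2}\,\E\{J_i(\bmu(t)+\sigma_t\zb)^2\|\zb^i\|^2\} \le \tfrac{1}{\sigma_t^2}\sqrt{\E\{J_i^4\}\,\E\{\|\zb^i\|^4\}}.\]
A direct chi-squared calculation yields $\E\{\|\zb^i\|^4\}=d^2+2d=O(d^2)$. Assumption~\ref{assum:infty} (with $\alpha<2$) together with compactness of $\Ab$ ensures $\E\{J_i(\bmu(t)+\sigma_t\zb)^4\}$ is uniformly bounded for small $\sigma_t$, since the Gaussian tail dominates the sub-exponential growth of $J_i$. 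Combining these gives the claimed $O(d/\sigma_t^2)$.

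For $j=2$, I would exploit the subtraction of $J_i(\bmu(t))$ via the fundamental theorem of calculus,
\[J_i(\bmu(t)+\sigma_t\zb)-J_i(\bmu(t)) = \sigma_t\int_0^1 \nabla J_i(\bmu(t)+s\sigma_t\zb)^\top\zb\,ds,\]
and Jensen's inequality on the squared integral yields
\[\E\{\|\mb_2^i(t)\|^2|\EuScript F_t\} \le \int_0^1 \E\{(\nabla J_i(\bmu(t)+s\sigma_t\zb)^\top\zb)^2 \|\zb^i\|^2\}\,ds.\]
As $\sigma_t\to 0$, continuity of $\nabla J_i$ on the compact set $\Ab$ together with Assumption~\ref{assum:infty} lets dominated convergence reduce the integrand to $\E\{(\nabla J_i(\bmu(t))^\top\zb)^2\|\zb^i\|^2\}$. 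Expanding in coordinates and retaining only the non-vanishing fourth moments of independent standard Gaussians, where $\E\{(z^i_n)^4\}=3$ and $\E\{(z^j_k)^2(z^i_n)^2\}=1$ for $(j,k)\neq(i,n)$ while all odd moments vanish, this expectation equals exactly
\[2\|\Mb_i(\bmu(t))\|^2 + d\|\nabla J_i(\bmu(t))\|^2.\]
Since $\nabla J_i\in\R^{Nd}$ has entries uniformly bounded on the compact set $\Ab$ (Assumptions~\ref{assum:convex}--\ref{assum:compact}), $\|\nabla J_i(\bmu(t))\|^2 = O(Nd)$, and the claimed $O(Nd^2)$ follows.

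The main obstacle is obtaining the tight $O(Nd^2)$ rate in the two-point case: a crude bound via $(\nabla J_i^\top\zb)^2 \le \|\nabla J_i\|^2\|\zb\|^2$ would combine with $\E\{\|\zb\|^2\|\zb^i\|^2\}=Nd^2+2d$ and $\|\nabla J_i\|^2=O(Nd)$ to inflate the bound to $O(N^2d^3)$. The explicit fourth-moment enumeration, which annihilates all off-diagonal index pairings, is what saves an entire factor of $Nd$. A secondary technical point is justifying the $\sigma_t\to 0$ limit under the expectation, which is handled by combining $C^1$-smoothness (Assumption~\ref{assum:convex}), compactness (Assumption~\ref{assum:compact}), and sub-exponential growth control (Assumption~\ref{assum:infty}).
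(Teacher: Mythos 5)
Your argument is correct and follows the same route as the paper's (deferred) proof: pass to the standardized variable $\zb$, bound the variance by the second moment, use Cauchy--Schwarz plus Gaussian moment calculations for $j=1$, and exploit the cancellation $J_i(\bxi)-J_i(\bmu)=O(\sigma_t)$ via the fundamental theorem of calculus for $j=2$; your exact fourth-moment computation $2\|\Mb_i(\bmu)\|^2+d\|\nabla J_i(\bmu)\|^2$ checks out and is indeed what rescues the factor of $Nd$ over the crude bound. The one soft spot is the dominated-convergence step in the two-point case: Assumption~\ref{assum:infty} controls the growth of $J_i$ at infinity but says nothing about $\nabla J_i$, so integrability of $(\nabla J_i(\bmu+s\sigma_t\zb)^\top\zb)^2\|\zb^i\|^2$ against the Gaussian requires an implicit sub-Gaussian growth bound on the gradient (the paper makes the same implicit assumption when it applies Lemma~\ref{lem:aux} to $\|\nabla^2 M_{i,k}\|$ in Proposition~\ref{prop:strMon}), so this is consistent with the paper's level of rigor but worth stating explicitly.
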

The proof of this result is similar to one of Lemma 1 in \cite{tatarenko2019learning}.% More details can be found in~\cite{arxivTatKam_Feb2022} (see Lemma 2 therein).

%Given the properties of $\tilde\Mb^{i,(t)}$ and $\Rb_{j}^{i}(t)$ established above, and the equivalence of \eqref{eq:alg} with \eqref{eq:pbavmu}, we can interpret   \eqref{eq:alg} as a stochastic gradient descent procedure. 

\section{Convergence Rate of the Algorithm}\label{sec:rate}
We will provide the analysis of Algorithm~\ref{alg:algorithm1} in the cases $j=1,2$ (one-point and two-point gradient estimations) under the following smoothness assumption. 
\begin{assumption}	\label{assum:Lipschitz}
The pseudo-gradient $\Mb:\R^{Nd}\to\R^{Nd}$, (see Definition~\ref{def:pg}) is twice  differentiable over $\R^{Nd}$.
\end{assumption}
Note that twice  differentiability  implies that the Jacobian of the pseudo-gradient is Lipschitz continuous. This latter condition was employed  in \cite{drusvyatskiy2022improved} for deriving the convergence rate under the strong montonicity assumption. As we do not assume  strong monotonicity, we need the slightly stronger assumption of twice differentiability (see Assumption~\ref{assum:Lipschitz} above).

\begin{theorem}\label{th:main}
Let the states $\bmu^i(t)$, $i\in[N]$, evolve according to Algorithm~\ref{alg:algorithm1} with the gradient estimators $\mb^i_j(t)$, $j=1,2$. 
Let Assumptions\r\ref{assum:convex}--\ref{assum:Lipschitz} hold.
Moreover, let the step size parameter in the procedure be chosen as follows: $\gamma_t = \frac{c}{t}$ with $c\ge \frac{1}{\nu}$, where $\nu$ is the  constant from Assumption~\ref{assum:CG_grad}. Moreover, let
\begin{align*}
	\sigma_t =\begin{cases}
	 &\frac{a}{t^{\frac{1}{4}}}, \mbox{ if $j=1$,}\\
	 &\frac{a}{t^s},  \mbox{ if $j=2$,}
	 \end{cases} 
 \end{align*}
where $a>0$ and $s\ge 1$. 

Then the joint state $\bmu(t)$ converges almost surely to the unique Nash equilibrium $\bmu^*=\ba^*$ of the game $\Gamma$, whereas the joint query point $\bxi(t)$ converges in probability to $\ba^*$. Moreover,
\begin{align*}
\E \|\bmu(t) - \ba^*\|^2=\begin{cases}
	&O\left(\frac{Nd}{t^{1/2}}\right), \mbox{ if $j=1$,}\\
	&O\left(\frac{N^2d^2}{t}\right),  \mbox{ if $j=2$.}
\end{cases}
\end{align*}
\end{theorem}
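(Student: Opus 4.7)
The plan is to analyze the one-step evolution of $V_t := \E\|\bmu(t) - \ba^*\|^2$ and reduce the proof to a scalar recursion amenable to standard decreasing-sequence lemmas. Starting from the equivalent stochastic form \eqref{eq:pbavmu} of the update, and using non-expansiveness of the projection together with $\ba^* \in \Ab$, I would expand
\begin{align*}
\|\bmu(t+1) - \ba^*\|^2 \le \|\bmu(t) - \ba^*\|^2 &- 2\gamma_t\langle \tilde\Mb^{(t)}(\bmu(t)) + \Rb(t),\, \bmu(t) - \ba^*\rangle \\
&+ \gamma_t^2 \|\tilde\Mb^{(t)}(\bmu(t)) + \Rb(t)\|^2.
\end{align*}
Conditioning on $\EuScript F_t$ and using \eqref{eq:mathexp2} kills the inner product with $\Rb(t)$, while $\|\cdot\|^2$ splits (up to a cross term that vanishes in expectation) into a bounded deterministic piece plus $\E\{\|\Rb(t)\|^2|\EuScript F_t\}$.

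The central ingredient is the anticipated Proposition~\ref{prop:strMon}: under strong variational stability (Assumption~\ref{assum:CG_grad}) and twice differentiability (Assumption~\ref{assum:Lipschitz}), the Gaussian-smoothed pseudo-gradient $\tilde\Mb^{(t)}$ inherits SVS up to an $O(\sigma_t^2)$ perturbation, i.e.
\[
\langle \tilde\Mb^{(t)}(\bmu), \bmu - \ba^*\rangle \ge \nu \|\bmu - \ba^*\|^2 - C\sigma_t^2,
\]
where the $O(\sigma_t^2)$ (and not $O(\sigma_t)$) bias comes from symmetry of the Gaussian kernel killing the first-order Taylor term of $\Mb$, so only the Hessian term survives; any residual cross term with $\|\bmu - \ba^*\|$ is absorbed by Young's inequality into the quadratic. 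Combined with compactness of $\Ab$ (Assumption~\ref{assum:compact}) to bound $\|\tilde\Mb^{(t)}(\bmu(t))\|^2$ uniformly, and Lemma~\ref{lem:Rsq} for $\E\{\|\Rb_{i,j}(t)\|^2|\EuScript F_t\}$, taking total expectation and using $\gamma_t = c/t$ with $c\nu \ge 1$ yields
\[
V_{t+1} \le \Bigl(1 - \tfrac{2}{t}\Bigr) V_t + O\bigl(\gamma_t^2\bigr) + O\bigl(\gamma_t \sigma_t^2\bigr) + \gamma_t^2 \, \E\|\Rb(t)\|^2.
\]

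Plugging in the prescribed schedules finishes the proof. For $j=1$ with $\sigma_t = a/t^{1/4}$, summing Lemma~\ref{lem:Rsq} over the $N$ players gives $\gamma_t^2 \E\|\Rb(t)\|^2 = O(Nd/(\sigma_t^2 t^2)) = O(Nd/t^{3/2})$, while the bias term $\gamma_t \sigma_t^2 = O(1/t^{3/2})$ is of the same order. For $j=2$ with $\sigma_t = a/t^s$, $s\ge 1$, the variance term dominates: $\gamma_t^2 \E\|\Rb(t)\|^2 = O(N^2 d^2/t^2)$ and the bias contribution $\gamma_t\sigma_t^2 = O(1/t^{2s+1})$ is negligible. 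A standard scalar recursion lemma --- if $V_{t+1} \le (1 - \beta/t) V_t + A/t^{1+\alpha}$ with $\beta > \alpha > 0$ then $V_t = O(A/t^\alpha)$ --- delivers the announced $O(Nd/t^{1/2})$ and $O(N^2d^2/t)$ rates.

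The main obstacle is the first step: establishing a quadratic lower bound on $\langle \tilde\Mb^{(t)}(\bmu), \bmu-\ba^*\rangle$ without monotonicity of $\Mb$. Strong variational stability is a one-point property (it constrains $\Mb(\bmu)$ only relative to $\ba^*$, not relative to arbitrary pairs), so one cannot directly push an expectation inside and retain a quadratic bound; the argument must genuinely use the smoothness hypothesis on $\Mb$ to Taylor-expand $\tilde\Mb^{(t)}(\bmu)$ around $\Mb(\bmu)$ and verify that the second-order Gaussian moment contributes only an $O(\sigma_t^2)$ perturbation. Once this Proposition~\ref{prop:strMon}-type estimate is in hand, the almost-sure convergence of $\bmu(t)$ to $\ba^*$ and the convergence in probability of $\bxi(t)$ (using $\E\|\bxi(t) - \bmu(t)\|^2 = Nd\sigma_t^2 \to 0$) follow from the Robbins--Siegmund supermartingale theorem applied to the same one-step inequality.
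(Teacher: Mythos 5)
Your proposal is correct and follows essentially the same route as the paper: non-expansiveness of the projection, zero-mean of $\Rb(t)$, the variance bounds of Lemma~\ref{lem:Rsq}, the key "almost SVS" estimate of Proposition~\ref{prop:strMon} obtained via a second-order Taylor expansion in which Gaussian symmetry kills the first-order term, and finally Chung's lemma applied to the resulting recursion. The only cosmetic differences are that you retain the factor $2$ in $(1-2/t)$ (the paper weakens this to $(1-\nu\gamma_t)$ before invoking $c\nu\ge 1$) and you absorb the bias cross term by Young's inequality where the paper simply bounds $\|\bmu-\ab^*\|$ by compactness; neither changes the argument.
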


\subsection{Discussion on the established rates}
\subsubsection{Algorithm parameters} The optimal step size $\gamma_t$, in both  one- and two-point estimation procedures, is dependent on the monotonicity constant $\nu$. For example, for $j=1$, $\gamma_t = \frac{c}{t}$ with $c\ge \frac{1}{\nu}$. This choice is motivated by the idea of applying the Chung's lemma for deriving the rates (see the past paragraph in the proof of the main result). Any other setting of the following form $\gamma_t = \frac{c}{t}$, $c<\frac{1}{\nu}$, will imply applicability of the Chung's lemma as well. However, in this case, according to the Chung's lemma, we get a less tight convergence rate. %

\subsubsection{Comparison with optimal rates for strongly monotone games}
In the one-point setting, the best rate for learning Nash euqilibria in strongly monotone games \cite{drusvyatskiy2022improved} was derived as $O\left(\frac{N^2d^2}{t^{1/2}}\right)$ (see also \cite{arxivTatKam_Feb2022} on the rate estimation without explicit dependence on the dimension). Here, we achieve the same rate without the monotonicity assumption and with a better dependence on the game's dimension. In the two-point setting, the best rate was established as $O\left(\frac{1}{t}\right)$ in \cite{arxivTatKam_Feb2022} under strong monotonicity of the game. Our current result matches this as well. 
%In particular, it appears that the  strong variational stability property of the Nash equilibrium is the key factor in achieving the optimal convergence rates (as functions of $t$) rather than strong monotonicity.  

\subsubsection{Comparison with optimal rates for strongly convex optimization} Our rates derived here match (as functions of $t$) those of zeroth-order optimization for strongly convex smooth problems (see \cite{Duchi2015, shamir2013complexity}) under one-point and two-point feedback. As this class of optimization problems is a strict subclass of game-theoretic problems considered here, our results are notable. In particular, we can achieve these rates in the optimization setting without  assuming convexity of the problem. For concreteness, observe that the game provided in Example \ref{example:3player} is a potential game since its pseudo-gradient is symmetric. This game has a unique Nash equilibrium and thus, finding the equilibrium is equivalent to finding the optimizer of the non-convex potential function, with the special property of strong variational stability of the optimizer. We note that in the case of non-convex optimization, several conditions have been derived, such as gradient dominance \cite{karimi2016linear}, that extend the applicability of optimal rates of gradient descent. It will be highly relevant for the future research to explore the connection of the above optimization conditions with the VS equilibrium definition.

\subsection{Proof of main result}

We will base our analysis on the algorithm's representation in~\eqref{eq:pbavmu}. Thus, in this subsection we exploit the properties of the term $\tilde{\Mb}^{i,(t)} (\bmu(t))$ therein.

Let us now focus on the mapping $\tilde{\Mb}^{(t)}(\cdot) =(\tilde{\Mb}^{1,(t)} (\cdot), \ldots, \tilde{\Mb}^{N,(t)}(\cdot))$, where, as before, for any $\bmu\in\R^{Nd}$
\begin{align}
\label{eq:smooth_t_game}
&\tilde{\Mb}^{i,(\sigma_t)} (\bmu) =\tilde{\Mb}^{i,(t)} (\bmu)\cr
&= \nabla_{\bmu^i}J^i(\bmu,\sigma_t)= \int_{\mathbb R^{Nd}}{\Mb^i} (\bx)p(\bx;\bmu,\sigma_t)d\bx
\end{align}
for some given $\sigma_t$ (see definition in \eqref{eq:mapp2} and also the property \eqref{eq:gradmix}).  We emphasize that such mapping is the pseudo-gradient in the mixed strategies, given that the joint action is generated by the normal distribution with the density~\eqref{eq:densityfull}.

A technical novelty in deriving the convergence in the absence of strong monotonicity is Proposition \ref{prop:strMon} below. It states that under the made assumption the Nash equilibrium $\ab^*$ stays ``almost" strongly variationally stable with respect to the pseudo-gradient in the mixed strategies. 
\begin{prop}\label{prop:strMon}
Let Assumptions~\ref{assum:compact},~\ref{assum:CG_grad},~\ref{assum:infty}, and \ref{assum:Lipschitz} hold.  Then $\langle\tilde{\Mb}^{(t)}(\bmu), \bmu-\ab^*\rangle \ge -O(Nd\sigma_t^2)  + \nu\|\bmu - \ab^*\|^2$ for any $\bmu\in\Ab$.
\end{prop}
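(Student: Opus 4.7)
The plan is to split the mixed-strategy pseudo-gradient as
\begin{equation*}
\tilde{\Mb}^{(t)}(\bmu) = \Mb(\bmu) + E(\bmu,\sigma_t), \qquad E(\bmu,\sigma_t) := \tilde{\Mb}^{(t)}(\bmu) - \Mb(\bmu),
\end{equation*}
and analyze the two pieces separately in the pairing with $\bmu - \ab^*$. Since $\bmu \in \Ab$ by assumption, Assumption~\ref{assum:CG_grad} (strong variational stability) immediately yields the ``good'' term
$(\Mb(\bmu), \bmu - \ab^*) \ge \nu\|\bmu - \ab^*\|^2$.
Thus the whole task reduces to showing that the Gaussian-smoothing error satisfies $|(E(\bmu,\sigma_t), \bmu - \ab^*)| = O(Nd\sigma_t^2)$.

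To bound the smoothing error, I would write $\bxi = \bmu + \sigma_t \bz$ with $\bz \sim \mathcal N(\zz, I_{Nd})$ so that, by \eqref{eq:smooth_t_game}, $\tilde{\Mb}^{(t)}(\bmu) = \E_\bz[\Mb(\bmu + \sigma_t \bz)]$. Then I would apply a second-order Taylor expansion of each scalar component $M_{i,k}$ of the pseudo-gradient around $\bmu$; Assumption~\ref{assum:Lipschitz} guarantees that each $M_{i,k}$ is $C^2$, so by the integral form of the Taylor remainder
\begin{equation*}
M_{i,k}(\bmu + \sigma_t \bz) = M_{i,k}(\bmu) + \sigma_t \nabla M_{i,k}(\bmu)^{\top}\bz + \sigma_t^2\!\int_0^1(1-s)\,\bz^{\top} H_{i,k}(\bmu + s\sigma_t\bz)\bz\,ds,
\end{equation*}
where $H_{i,k}$ denotes the Hessian of $M_{i,k}$. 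Because $\E[\bz]=\zz$, the first-order term vanishes upon taking expectation, leaving
\begin{equation*}
[E(\bmu,\sigma_t)]_{i,k} = \sigma_t^2 \int_0^1 (1-s)\,\E\!\left[\bz^{\top} H_{i,k}(\bmu + s\sigma_t\bz)\bz\right]\! ds.
\end{equation*}
Assumption~\ref{assum:infty} on the sub-exponential growth of $J_i$ (which propagates to $M_{i,k}$ and its derivatives) together with the Gaussian tail ensures that the expectation above is finite and bounded by a constant times $\E\|\bz\|^2 = Nd$, giving $|[E(\bmu,\sigma_t)]_{i,k}| \le C\sigma_t^2$ uniformly on the compact set $\Ab$.

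Finally, Assumption~\ref{assum:compact} implies that $\|\bmu - \ab^*\|$ is bounded uniformly by $\mathrm{diam}(\Ab)$ on $\Ab$, so expanding the inner product componentwise and collecting the $Nd$ contributions yields
\begin{equation*}
|(E(\bmu,\sigma_t), \bmu - \ab^*)| \le \sum_{i,k} |[E(\bmu,\sigma_t)]_{i,k}|\,|\mu_k^i - a_k^{i*}| = O(Nd\sigma_t^2),
\end{equation*}
which combined with the strongly variationally stable lower bound for $(\Mb(\bmu),\bmu-\ab^*)$ gives the claim. The main obstacle I expect is the careful bookkeeping in the Taylor remainder: because the Gaussian is integrated over all of $\R^{Nd}$ while strong variational stability only holds on the compact set $\Ab$, one must rely on Assumption~\ref{assum:infty} to control the Hessian of $\Mb$ against the Gaussian tails, and then ensure that the $Nd$-dimensional expectation $\E[\bz^{\top} H \bz]$ is absorbed cleanly into the stated $O(Nd\sigma_t^2)$ bound rather than producing extra dimension factors.
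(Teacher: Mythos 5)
Your proposal is correct and follows essentially the same route as the paper: decompose $\tilde{\Mb}^{(t)}(\bmu)$ into $\Mb(\bmu)$ plus a smoothing error, expand each component $M_{i,k}$ to second order so that the first-order Gaussian term vanishes, bound the Hessian remainder via the growth condition of Assumption~\ref{assum:infty} (the paper formalizes this with Lemma~\ref{lem:aux}) to get an $O(Nd\sigma_t^2)$ error, and invoke strong variational stability for the leading term. The only differences are cosmetic (integral versus Lagrange form of the remainder, substitution $\bxi=\bmu+\sigma_t\bz$), and the bookkeeping concern you raise about absorbing the extra dimension factors from summing over components is present in the paper's own argument as well.
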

\begin{proof}
We focus on each term in the following sum representation of the dot-product $\langle\tilde{\Mb}^{(t)}(\bmu), \bmu-\ab^*\rangle$:
\begin{align}\label{eq:sum}
\langle\tilde{\Mb}^{(t)}(\bmu), \bmu-\ab^*\rangle = \sum_{i=1}^{N}\sum_{k=1}^{d} \tilde{M}^{i,k,(t)}(\bmu)(\mu^{i,k}-a^{*,i,k}).
\end{align}
Due to Assumption~\ref{assum:Lipschitz}, we can use the following Taylor approximation for the elements $M^{i,k}$, $i\in[N]$, $k\in[d]$, of the mapping $\Mb$ around some point $\bmu\in\Ab$ (see Definition~\ref{def:pg}): 
\begin{align}\label{eq:Ta}
M^{i,k}(\bx) = &M^{i,k}(\bmu) + \langle\nabla M^{i,k}(\bmu), \bx - \bmu\rangle \cr
&+ \langle\nabla^2 M^{i,k}(\tilde \bx) (\bx - \bmu),\bx - \bmu\rangle,
\end{align}
where $\tilde \bx = \bmu + \theta(\bx - \bmu)$ for some $\theta\in[0,1]$. 
Taking into account the fact that 
\[\tilde{M}^{i,k,(t)}(\bmu) = \int_{\mathbb R^{Nd}}{M^{i,k}} (\bx)p(\bx;\bmu,\sigma_t)d\bx,\]
we obtain 
\begin{align}
&\tilde{M}^{i,k,(t)}(\bmu)(\mu^{i,k}-a^{*,i,k})\cr
& = \left[\int_{\mathbb R^{Nd}}({M^{i,k}} (\bx) - M^{i,k} (\bmu))p(\bx;\bmu,\sigma_t)d\bx\right](\mu^{i,k}-a^{*,i,k})\cr
&\qquad +  M^{i,k} (\bmu)(\mu^{i,k}-a^{*,i,k})\cr
& = \left[\int_{\mathbb R^{Nd}}\langle\nabla^2 M^{i,k}(\tilde \bx) (\bx - \bmu),\bx - \bmu\rangle p(\bx;\bmu,\sigma_t)d\bx\right]\cr
&\qquad\qquad\quad\times(\mu^{i,k}-a^{*,i,k}) +  M^{i,k} (\bmu)(\mu^{i,k}-a^{*,i,k}),
\end{align}
where in the last equality we used~\eqref{eq:Ta} and the fact that
\[\int_{\mathbb R^{Nd}}\langle\nabla M^{i,k}(\bmu), \bx - \bmu\rangle p(\bx;\bmu,\sigma_t)d\bx=0.\]
We note that $\int_{\mathbb R^{Nd}}\langle\nabla^2 M^{i,k}(\tilde \bx) (\bx - \bmu),\bx - \bmu\rangle p(\bx;\bmu,\sigma_t)d\bx = \E\{\langle \nabla^2 M^{i,k}(\tilde \bxi) (\bxi - \bmu),\bxi - \bmu\rangle\}$, given that $\bxi$ has the Gaussian distribution with the density function $p(\bx;\bmu,\sigma_t)$ and $\tilde \bxi = \bmu + \theta(\bxi - \bmu)$. Next, using the  Cauchy-Schwarz and H\"older's inequalities as well as Lemma~\ref{lem:aux} (see Appendix~\ref{app:Lemma}), we obtain
\begin{align}\label{eq:Ineq}
\E&\{\langle\nabla^2 M^{i,k}(\tilde \bxi) (\bxi - \bmu),\bxi - \bmu\rangle\}\cr
&\ge - \E\{\|\nabla^2 M^{i,k}(\tilde \bxi) \|\|\bxi - \bmu\|^2\} = -O(Nd\sigma_t^2). 
\end{align}
Combining~\eqref{eq:sum}-\eqref{eq:Ineq}, we conclude that 
\begin{align*}%\label{eq:term1}
\langle\tilde{\Mb}^{(t)}(\bmu), \bmu-\ab^*\rangle \ge &-O(Nd\sigma_t^2)\sum_{i=1}^{N}\sum_{k=1}^{d}\|\mu^{i,k}-a^{*,i,k}\|  \cr
&+ \langle\Mb(\bmu), \bmu - \ab^*\rangle\cr
& \ge - O(Nd\sigma_t^2)  + \nu\|\bmu - \ab^*\|^2, 
\end{align*}
where in the last inequality we used $\bmu,\ab^*\in\Ab$ and compactness of $\Ab$ (Assumption~\ref{assum:compact}) and $\langle\Mb(\bmu), \bmu - \ab^*\rangle\ge\nu\|\bmu - \ab^*\|^2 $ (Assumption~\ref{assum:CG_grad}). 
\end{proof}

We are now equipped to provide the proof of Theorem \ref{th:main}.

\begin{proof}{(of Theorem \ref{th:main})}
Let us notice that due to the theorem's conditions and the particular choice $\sigma_t\to 0$, as $t\to\infty$, Proposition\ref{prop:strMon} hold.

We consider $\|\bmu(t+1)-\ab^*\|^2$.
We aim to bound the growth of $\|\bmu(t+1)-\ab^*\|^2$ in terms of $\|\bmu(t)-\ab^*\|^2$ and, thus, to obtain the convergence rate of the sequence  $\|\bmu(t+1)-\ab^*\|^2$. 

We analyze each term in the following sum $\|\bmu(t+1)-\ab^*\|^2 = \sum_{i=1}^{N} \|\bmu^i(t+1)-\ab^{i*}\|^2$.
From the procedure for the update of $\bmu(t)$ in ~\eqref{eq:pbavmu}, the fact that $\ab^{i*}\in A^i$ and the non-expansion property of the projection operator, we obtain that for any $i\in[N]$
\begin{align}\label{eq:nonexp}
\|&\bmu^i(t+1)-\ab^{i*}\|^2 \le  \|\bmu^i(t)-\ab^{i*}\|^2 \cr
&\qquad- 2\gamma_t\langle\tilde{\Mb}^{i,(t)}(\bmu(t)), \bmu^i(t)-\ab^{i*}\rangle \cr
&\qquad-2\gamma_t\langle\Rb_{j}^{i}(t), \bmu^i(t)-\ab^{i*}\rangle + \gamma^2_t\|\Gb^i_j(t)\|^2,
\end{align}
where, for ease of notation, we have defined $\Rb_{j}^{i}(t) = \Rb_{j}^{i}(\bxi(t), \bmu(t),\sigma_t)$ and
\begin{align}\label{eq:G_1}
\Gb_{i}^{j}(t) = &\tilde{\Mb}^{i,(t)}(\bmu(t))  +\Rb_{j}^{i}(t).
\end{align}
We expand $\Gb_{i,j}$ as below and bound the terms in the expansion.
\begin{align}\label{eq:G_1_norm}
&\|\Gb_{i}^{j}(t)\|^2 = \|\tilde{\Mb}^{i,(t)}(\bmu(t)) \|^2 \cr
&+ \|\Rb_{j}^{i}(t)\|^2 +2\langle\tilde{\Mb}^{i,(t)}(\bmu(t)) ,\Rb_{j}^{i}(t)\rangle.
\end{align}
Thus, by taking into account~\eqref{eq:mathexp2}, which implies $\E\{\Rb_{j}^{i}(t)|\EuScript F_t\} =\boldsymbol 0$ for any $\bmu$, and the Cauchy-Schwarz inequality, we get from \eqref{eq:nonexp} 
\begin{align}\label{eq:firstEst}
&\E\{\|\bmu^i(t+1)-\ab^{i*}\|^2|\EuScript F_t\} \le \|\bmu^i(t)-\ab^{i*}\|^2 \cr
&- 2\gamma_t\langle\tilde{\Mb}^{i,(t)}(\bmu(t)), \bmu^i(t)-\ab^{i*}\rangle \cr
& +\gamma^2_t\E\{\|\Gb^{i}_j(t)\|^2|\EuScript F_t\}\cr
& \le\|\bmu^i(t)-\ab^{i*}\|^2 \cr
&- 2\gamma_t\langle\tilde{\Mb}^{i,(t)}(\bmu(t)), \bmu^i(t)-\ab^{i*}\rangle \cr
& + \gamma^2_t[\|\tilde{\Mb}^{i,(t)}(\bmu(t)) \|^2+\E\{\|\Rb_{j}^{i}(t)\|^2|\EuScript F_t\}].
\end{align}
Next, taking into account Lemmas~\ref{lem:Rsq} and using continuity of $\tilde{\Mb}^{(t)}$ (for the proof of continuity see \cite{arxivTatKam_Feb2022}, Proposition 2 therein) as well as compactness of $\Ab$, and, thus, boundedness of $\|\tilde{\Mb}^{i,(t)}(\bmu(t))\|$, we conclude that 
\begin{align}\label{eq:secondEst}
&\E\{\|\bmu^i(t+1)-\ab^{i*}\|^2|\EuScript F_t\} \le\|\bmu^i(t)-\ab^{i*}\|^2 \cr
&\,- 2\gamma_t\langle\tilde{\Mb}^{i,(t)}(\bmu(t)), \bmu^i(t)-\ab^{i*}\rangle+ h_0(t),
\end{align}
where
\begin{align}\label{eq:ht0}
h_0(t) = \begin{cases}
	&O\left(\frac{d\gamma^2_t}{\sigma^2_t}\right), \mbox{ if $j=1$},\\
	&O({Nd^2\gamma^2_t}), \mbox{ if $j=2$}.
\end{cases}	
\end{align}
Thus, due to Proposition~\ref{prop:strMon} we conclude from~\eqref{eq:secondEst} by summing up the inequalities over $i=1,\ldots,N$,
\[\E\{\|\bmu(t+1)-\ab^{*}\|^2|\EuScript F_t\}\le(1-\nu\gamma_t)\|\bmu(t)-\ab^*\|^2+h_1(t),\]
where 
\begin{align}\label{eq:ht}
h_1(t) = \begin{cases}
	&O\left(\frac{Nd\gamma^2_t}{\sigma^2_t}+ Nd\gamma_t\sigma_t^2\right), \mbox{ if $j=1$},\\
	&O({N^2d^2\gamma^2_t} + Nd\gamma_t\sigma_t^2), \mbox{ if $j=2$}.
\end{cases}	
\end{align}
Thus, given the settings for the parameters $\gamma_t = \frac{c}{t}$ with $c\ge \frac{1}{\nu}$ and $\sigma_t$, the definition of $h_1(t)$ (see~\eqref{eq:ht}),  we conclude that
\begin{align}\label{eq:LV2}
&\E\{\|\bmu(t+1)-\ab^*\|^2|\EuScript F_t\}\cr&\le\left(1-\frac{1}{t}\right)\|\bmu(t)-\ab^*\|^2
+ H(t),
\end{align}
where
\begin{align}\label{eq:Ht}
	H(t) = \begin{cases}
		&O\left(\frac{Nd}{t^{3/2}}\right), \mbox{ if $j=1$},\\
		&O(\frac{N^2d^2}{t^2}), \mbox{ if $j=2$}.
	\end{cases}	
\end{align}
The inequality \eqref{eq:LV2} implies that $\bmu(t)$ converges to $\ab^*$ almost surely (see Lemma 10 in Chapter 2.2~\cite{Polyak}). Taking into account that $\bxi(t)\sim\EuScript N(\bmu(t),\sigma_t)$ and $\sigma_t\to 0$ as $t\to\infty$, we conclude that $\bxi(t)$ converges weakly to a Nash equilibrium $\ab^*$. Moreover, according to the Portmanteau Lemma \cite{portlem}, this convergence is also in probability.
Next, by taking the full expectation of the both sides in~\eqref{eq:LV2}, we obtain
\begin{align*}
\E[\|\bmu(t+1)-\ab^*\|^2]\le&\left(1-\frac{1}{t}\right)\E\|\bmu(t) - \ab^*\|^2\cr&+ H(t).
\end{align*}
By applying the Chung's lemma (see Lemma 4 in Chapter 2.2~\cite{Polyak}, for the reader's convenience we provide this lemma in Appendix~\ref{app:Ch})) to the inequality above, we conclude that
\begin{align*}
\E \|\bmu(t) - \ba^*\|^2=\begin{cases}
	&O\left(\frac{Nd}{t^{1/2}}\right), \mbox{ if $j=1$,}\\
	&O\left(\frac{N^2d^2}{t}\right),  \mbox{ if $j=2$.}
\end{cases}
\end{align*}
\end{proof}
\begin{figure}[!t]
\centering
\includegraphics[width=\linewidth]{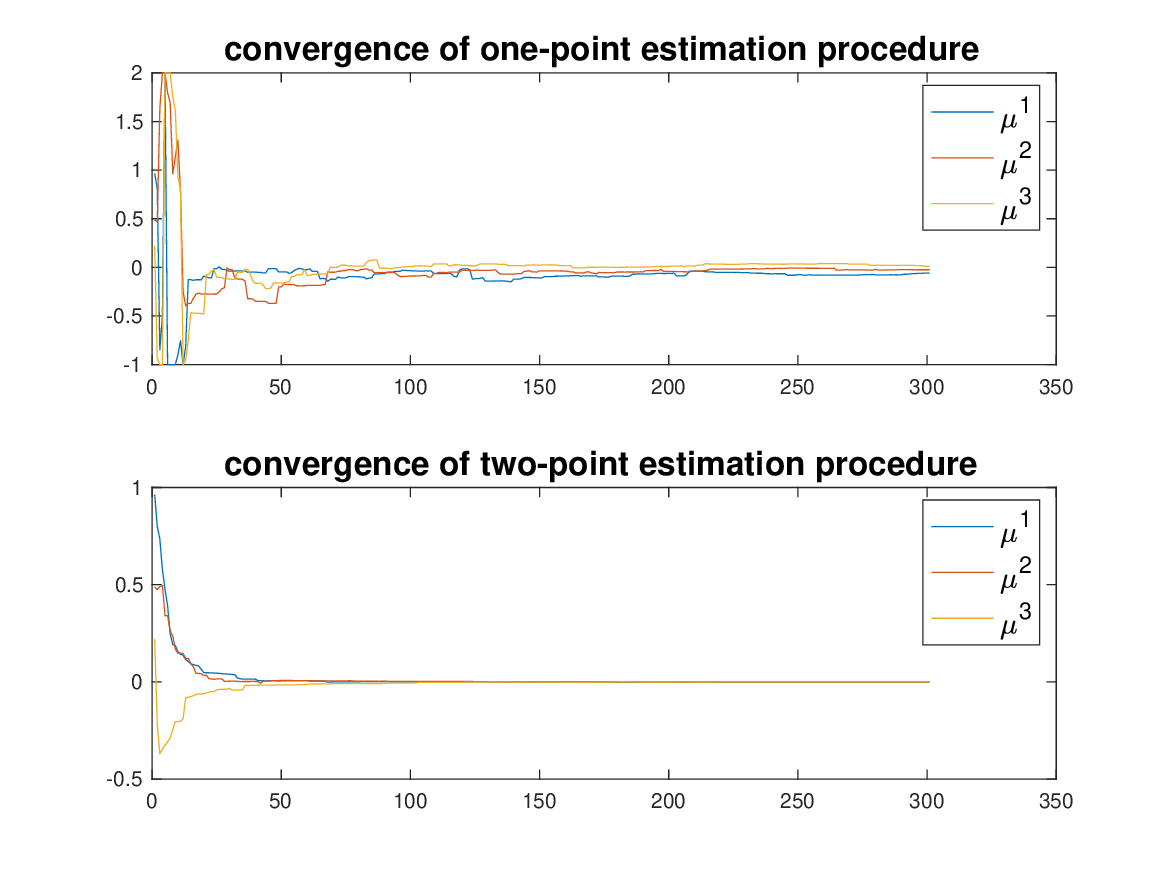}
\caption{Convergence of the algorithm for the game of Example \ref{example:3player} with strategy sets as $[-1,2]$. }
\label{fig:cube}
\end{figure}

\section{Numerical example}\label{sec:simulation}
We consider Example \ref{example:3player} in Section \ref{sec:problem}. Notice that the game satisfies Assumptions\r\ref{assum:convex}--\ref{assum:Lipschitz}. In the plots below, we show the convergence of the algorithm using the one-point and two-point feedback. The first plot considers the strategy set $[-1,2]$, whereas the second plot considers the strategy set $[0,1]$. The parameters were set to $c = 1$, $a = 1$ in both one-point and two-point setting, and $s=1$ for the case of the two-point setting.  The initial state $\bmu(0)$ was chosen from standard normal distribution. As predicted by the theory, in both one-point and two-point settings, the proposed algorithms converge to the unique SVS equilibrium of the game, despite lack of monotonicity. The two-point estimation results in much faster convergence rate, as also predicted.

\section{Conclusion}
\label{sec:conclusion}
We established  the convergence rate of the zeroth-order gradient play to the strongly VS Nash equilibrium of a convex game. In both the one-point and two-point setting, our rates of  $O\left(\frac{Nd}{t^{1/2}}\right)$ and $O\left(\frac{N^2d^2}{t}\right)$ appear to be optimal (as functions on $t$) as they match the best rates established for the subclass of strongly monotone games. An open question is the lower bound for the convergence rate of zeroth-order learning in convex games with respect to the problem dimension. Moreover, it will be interesting to further relax the assumptions so as to establish the convergence and derive the convergence rate of a zeroth-order gradient play to a merely VS equilibrium or to equilibria in non-convex games. 

\begin{figure}[!t]
\centering
\includegraphics[width=\linewidth]{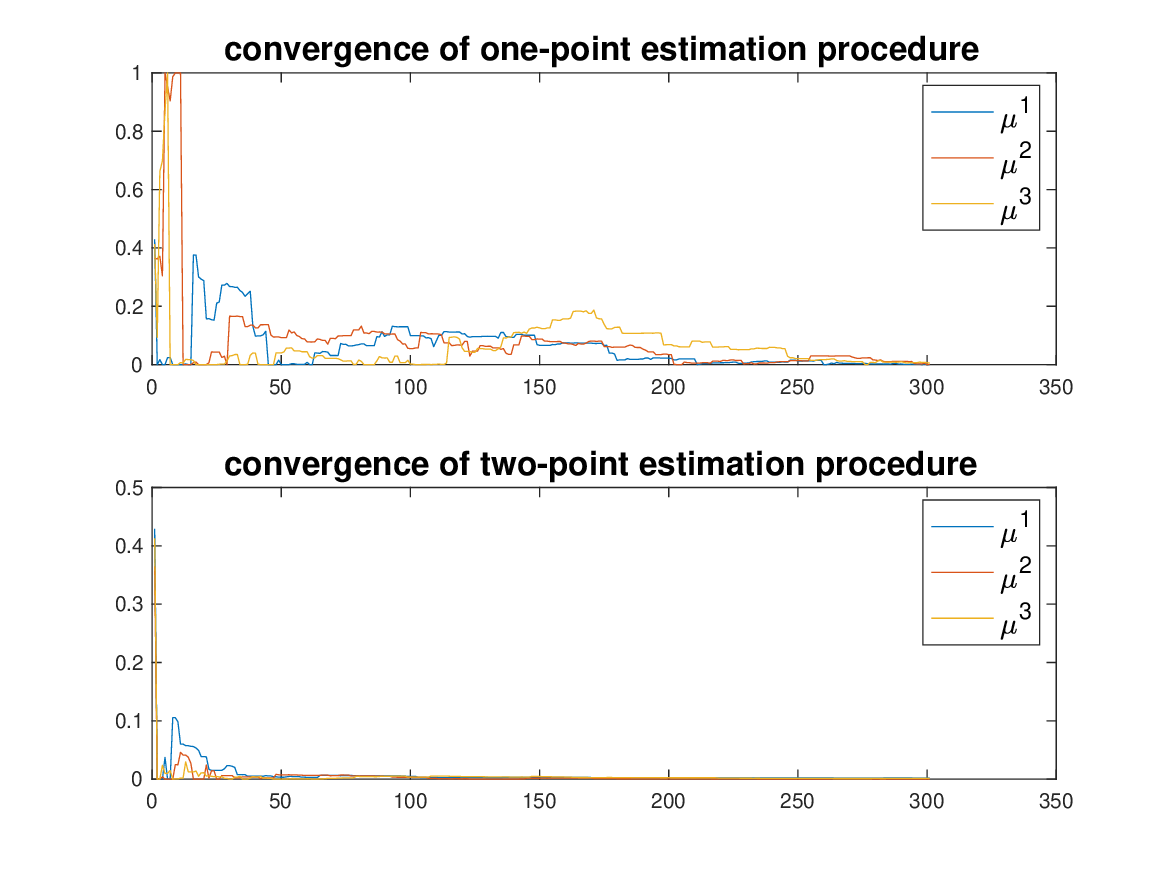}
\caption{Convergence of the algorithm for the game of Example \ref{example:3player} with strategy sets as $[0,1]$. }
\label{fig:cube_boundary}
\end{figure}

\bibliographystyle{plain}
\bibliography{srtrMonGames_ref}

%drusvyatskiy2022improved
%
%mertikopoulos2019learning
%
%arxivTatKam_Feb2022

%%%%%%%%%%%%%%%%%%%%%%%%%%%%%%%%%%%%%%%%%%%%%%%%%%%%%%%%%%%%%%%%%%%%%%%%%%%%%%%%%%%%%%%%%
%%%%%%%%%%%%%%%%%%%%%%%%%%
%%%%%%%%%%%%%%%%APPENDIX%%%%%%%%%%%%%%%%%
%%%%%%%%%%%%%%%%%%%%%%%%%%
%%%%%%%%%%%%%%%%%%%%%%%%%%%%%%%%%%%%%%%%%%%%%%%%%%%%%%%%%%%%%%%%%%%%%%%%%%%%%%%%%%%%%%%%%

\appendix
\subsection{Auxiliary Result}\label{app:Lemma}
The following auxiliary lemma will be used in the proofs of some propositions below.
\begin{lem}\label{lem:aux}
Let some continuous function $f(\bx): \R^{Nd}\to\R$ be  such that $f(\bx)\ge 0$ for any $\bx\in\R^{Nd}$ and  $f(\bx) = O(\exp\{\|\bx\|^{\alpha}\})$ as $\|\bx\|\to\infty$, where $\alpha<2$. Let $\Ab$ be some compact subset of $\R^{Nd}$. Finally, let $p(\bx; \bmu, \sigma_t)$, $\bmu\in\Ab$, be the density function of the Gaussian vector $\bxi$ as defined in~\eqref{eq:densityfull} and $\tilde \bxi = \bmu+\theta(\bxi-\bmu)$ for some $\theta\in[0,1]$.
Then there exists a constant $C>0$  such that $\E\{f(\tilde \bxi)\}\le 2^{Nd}C$.
Moreover, if $\sigma_t\to 0$ as $t\to\infty$, then there exists $c>0$, which is independent on $d$ and $N$, such that $\E\{f(\tilde \bxi)\}\le c$ for all sufficiently large $t$.
\end{lem}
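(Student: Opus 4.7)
The plan is to handle the two bounds by quite different strategies, both exploiting the sub-quadratic growth hypothesis $f(\bx) = O(\exp\{\|\bx\|^{\alpha}\})$ with $\alpha<2$.

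For the first bound, I would first combine the growth hypothesis with continuity of $f$ to obtain a global estimate $f(\bx)\le C_1\exp(\|\bx\|^{\alpha})$ with $C_1$ depending only on $f$. Writing $\boldsymbol{z}:=\bxi-\bmu \sim \EuScript N(\zz,\sigma_t^2 I_{Nd})$ and using compactness of $\Ab$ to bound $\|\bmu\|\le R$, the elementary inequality $(a+b)^{\alpha}\le 2^{\alpha}(a^{\alpha}+b^{\alpha})$ together with $\theta\in[0,1]$ gives $\|\tilde\bxi\|^{\alpha} \le 2^{\alpha} R^{\alpha} + 2^{\alpha}\|\boldsymbol{z}\|^{\alpha}$, reducing the task to bounding $\E[\exp(2^{\alpha}\|\boldsymbol{z}\|^{\alpha})]$. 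The key trick is that because $\alpha/2<1$, the map $x\mapsto x^{\alpha/2}$ is subadditive on $[0,\infty)$, so
\begin{align*}
\|\boldsymbol{z}\|^{\alpha} = \Big(\sum_{k=1}^{Nd}z_k^2\Big)^{\alpha/2}\le \sum_{k=1}^{Nd}|z_k|^{\alpha},
\end{align*}
which decouples the coordinates. By independence of the $z_k$,
\begin{align*}
\E[\exp(2^{\alpha}\|\boldsymbol{z}\|^{\alpha})] \le \bigl(\E[\exp(2^{\alpha}|Z|^{\alpha})]\bigr)^{Nd},
\end{align*}
with $Z\sim\EuScript N(0,\sigma_t^2)$. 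The one-dimensional integral is finite since $\alpha<2$ and, for $\sigma_t$ in any bounded range (as in the algorithm), may be made at most $2$ by absorbing the excess into $C$. This yields the stated bound of the form $2^{Nd}C$.

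For the second bound, I would switch strategies and exploit the concentration of $\bxi$ around the compact set $\Ab$. Let $\mathcal N:=\{\bx:\mathrm{dist}(\bx,\Ab)\le 1\}$ and set $M:=\sup_{\bx\in\mathcal N}f(\bx)$, which is finite by continuity of $f$ on the compact $\mathcal N$ and \emph{depends only on $f$ and $\Ab$}, not on $N$ or $d$. Since $\tilde\bxi=\bmu+\theta(\bxi-\bmu)$ with $\theta\in[0,1]$, on the event $\{\|\bxi-\bmu\|\le 1\}$ one has $\tilde\bxi\in\mathcal N$ and hence $f(\tilde\bxi)\le M$. Splitting and applying Cauchy--Schwarz to the tail,
\begin{align*}
\E[f(\tilde\bxi)] \le M + \sqrt{\E[f(\tilde\bxi)^2]}\cdot\sqrt{\Pr(\|\bxi-\bmu\|>1)}.
\end{align*}
Since $f^2$ still satisfies a growth bound of the same exponential-subquadratic form, the first factor is at most $2^{Nd}C'$ by the part already proven. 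The probability $\Pr(\|\bxi-\bmu\|>1)=\Pr(\sigma_t^2\chi^2_{Nd}>1)$ admits a standard Chernoff bound of order $\exp(-\Omega(1/\sigma_t^2))$ once $1/\sigma_t^2\gg Nd$; since $\sigma_t\to 0$, one may choose $t_0$ (possibly depending on $N,d$) making the tail term below $1$ for all $t\ge t_0$, yielding a bound $c = M+1$ that depends only on $f$ and $\Ab$.

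The main obstacle I anticipate is precisely this race in the second part between the $2^{Nd}$ factor inherited from part one (applied to $f^2$) and the decay of the chi-squared tail: one must verify that the Gaussian tail decay beats any exponential-in-$Nd$ factor once $\sigma_t$ is small enough. This is why it is essential both that $\sigma_t\to 0$ and that the threshold $t_0$ is permitted to depend on $N$ and $d$, even though the resulting constant $c$ does not.
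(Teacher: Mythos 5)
Your argument is correct in substance. The paper itself does not prove this lemma here (it defers to Lemma 5 of the cited arXiv version), so a literal comparison is not possible; however, the $2^{Nd}$ in the statement strongly suggests the original route is to dominate $\exp\{\|\bx\|^{\alpha}\}$ by $e^{c_\alpha}\exp\{\|\bx\|^{2}\}$ (using $u^{\alpha}\le u^{2}+c_{\alpha}$ for $\alpha<2$) and then evaluate the Gaussian integral of $e^{\|\bx\|^{2}}$ in closed form, which produces the factor $(1-2\sigma_t^{2}\theta^{2})^{-Nd/2}\le 2^{Nd}$ once $\sigma_t^{2}\le 1/4$. You instead decouple the coordinates via subadditivity of $x\mapsto x^{\alpha/2}$ and bound a one-dimensional moment generating function; this is an equally valid route, and it has the genuine advantage that every estimate is pointwise in $\theta$, so you never need the change of variables $\by=\bmu+\theta(\bxi-\bmu)$ and hence never need to worry that $\theta$ is itself a ($\bxi$-dependent, possibly awkwardly measurable) random quantity coming from Taylor's theorem. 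Your treatment of the second claim --- truncating on $\{\|\bxi-\bmu\|\le 1\}$, bounding the tail by Cauchy--Schwarz against the first claim applied to $f^{2}$ (noting $2u^{\alpha}\le u^{\alpha'}+c$ for some $\alpha'\in(\alpha,2)$, so $f^2$ stays in the admissible class), and checking that $\exp\{-\Omega(1/\sigma_t^{2})\}$ eventually beats $2^{O(Nd)}$ --- is exactly the right mechanism for extracting a constant free of the $2^{Nd}$ blow-up, and you correctly observe that the threshold $t_0$ may depend on $N,d$ while $c=M+1$ does not.

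One point to tighten: the claim that the one-dimensional factor $\E[\exp(2^{\alpha}|Z|^{\alpha})]$ ``may be made at most $2$ for $\sigma_t$ in any bounded range by absorbing the excess into $C$'' is not accurate as stated. That factor tends to $1$ as $\sigma_t\to 0$ and is therefore $\le 2$ only once $\sigma_t$ is below a threshold depending on $\alpha$; for a merely bounded $\sigma_t$ it can exceed $2$, in which case the product is $K^{Nd}$ with $K>2$ and cannot be absorbed into a dimension-free $C$. This is a shared looseness with the lemma statement itself (the bound $2^{Nd}C$ cannot hold for arbitrary $\sigma_t$), and it is harmless in the paper's regime $\sigma_t\to 0$, but the restriction on $\sigma_t$ should be stated explicitly.
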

{Proof can be found in \cite{arxivTatKam_Feb2022} (see Lemma 5 therein). }

%We will use the H\"older's inequality:
%\begin{align}\label{eq:HI}
%\E|XY|\le (\E(X^2))^{1/2}(\E(Y^2))^{1/2},
%\end{align}
%which holds for any random variables $X$ and $Y$ such that $\E(X^2)<\infty$ and $\E(Y^2)<\infty$.}?

%\section{Example for Assumption \ref{assum:Lipschitz}}
%\label{sec:example}
%\begin{example}
%	Let us consider a game consisting of two players. Then, Assumption~\ref{assum:Lipschitz}.\ref{num:11}) holds, if $J_1(\bx), J_2(\bx): \R^2\to\R$ are such that $J_1(x_1,x_2) = x_1^4 + x_1^2 + x_1x_2$ and $J_2(x_1,x_2) = 3x_2^2 - x_1x_2$. 
%	On the other hand, let 
%	\begin{align*}
%		f(x) = \begin{cases}
%			3x^2-3x, &\mbox{ if $x\ge 1$},\\
%			x^3-1, &\mbox{ if $0\le x\le 1$}, \\
%			x^2-1, &\mbox{ if $x\le 0$}.			
%		\end{cases}
%	\end{align*}  
%If $J_1(x_1,x_2) = f(x_1) +0.5 x_1^2 + x_1x_2$ and $J_2(x_1,x_2) = 3x_2^2 - x_1x_2$, then Assumption~\ref{assum:Lipschitz}.\ref{num:12}) is fulfilled.
%\end{example}
%

\subsection{The Chung's Lemma (Lemma 4 in Chapter 2.2. \cite{Polyak})}\label{app:Ch}
\begin{lem}
Let $u_k\ge 0$ and
\[u_{k+1}\le\left(1-\frac{c}{k}\right)u_k + \frac{d}{k^{1+p}}, \quad d,p,c>0.\]
Then
\begin{align*}
	u_k\le d(c-p)^{-1}k^{-p} + o(k^{-p}), \quad &\mbox{if $c>p$},\cr
	u_k = O(k^{-c}\ln k), \quad &\mbox{if $c=p$}, \cr
	u_k = O(k^{-c}), \quad &\mbox{if $c<p$}.
\end{align*}
\end{lem}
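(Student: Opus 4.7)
The plan is to unroll the recursion explicitly and estimate the resulting expression. Fix $K$ large enough that $1 - c/k \in (0,1)$ for all $k \ge K$, and set $\Pi_{j,k} = \prod_{i=j}^{k-1}(1 - c/i)$ for $K \le j \le k$ with the convention $\Pi_{k,k} = 1$. Iterating $u_{i+1} \le (1-c/i)\,u_i + d/i^{1+p}$ from $i = K$ up to $i = k-1$ yields
\begin{equation*}
u_k \;\le\; \Pi_{K,k}\, u_K \;+\; d\sum_{j=K}^{k-1} \Pi_{j+1,k}\, j^{-(1+p)}.
\end{equation*}

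Next I would pin down the asymptotics of the product. Using $\log(1 - c/i) = -c/i + O(1/i^2)$ together with $\sum_{i=j}^{k-1} 1/i = \log(k/j) + O(1/j)$, one obtains
\begin{equation*}
\Pi_{j,k} \;=\; \exp\!\bigl(-c\log(k/j) + O(1/j)\bigr) \;=\; (j/k)^{c}\bigl(1 + O(1/j)\bigr).
\end{equation*}
Thus $\Pi_{K,k}\,u_K = O(k^{-c})$, which is absorbed into the claimed bound in each of the three cases, and the sum on the right reduces to
\begin{equation*}
d\sum_{j=K}^{k-1} \Pi_{j+1,k}\, j^{-(1+p)} \;=\; d\,k^{-c}\!\sum_{j=K}^{k-1} j^{\,c-p-1}\bigl(1 + O(1/j)\bigr).
\end{equation*}

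The three cases now reduce to classical asymptotics of $\sum_{j=K}^{k-1} j^{c-p-1}$. When $c > p$ the sum behaves like $k^{c-p}/(c-p) + O(1)$, so the leading contribution is $d(c-p)^{-1}k^{-p}$ while the remainder together with $\Pi_{K,k}u_K$ is $o(k^{-p})$. When $c = p$ the sum equals $\log k + O(1)$, yielding $O(k^{-c}\log k)$. When $c < p$ the sum converges to a finite constant, yielding $O(k^{-c})$. Each case matches the statement of the lemma.

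The main obstacle is to justify that the $O(1/i)$ and $O(1/j)$ remainders from the logarithmic expansion and the multiplicative correction $1 + O(1/j)$ inside the summand accumulate only to a genuine $o(k^{-p})$ (not merely $O(k^{-p})$) in case $c>p$, so that the leading constant $d(c-p)^{-1}$ is sharp. A convenient device is to split the sum at an intermediate threshold, e.g.\ $j \sim \sqrt{k}$: below the threshold the summands contribute $O(k^{-c}\cdot k^{(c-p)/2})$, which is $o(k^{-p})$; above it the factor $1 + O(1/j) = 1 + O(k^{-1/2})$ multiplies the leading-order piece $k^{c-p}/(c-p)$ and only perturbs it by an $o(k^{c-p})$ term. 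An alternative, purely inductive, route is to substitute an ansatz $v_k = A k^{-p}$ (respectively $A k^{-c}\log k$ and $A k^{-c}$ for cases 2 and 3), apply the Taylor expansion $(k+1)^{-q} = k^{-q} - q k^{-q-1} + O(k^{-q-2})$, and check that $u_k \le v_k$ is preserved once $A$ exceeds the corresponding threshold and $k$ is sufficiently large; letting $A \downarrow d/(c-p)$ then recovers the sharp $o(k^{-p})$ statement in case 1.
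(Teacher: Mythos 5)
The paper does not prove this statement at all: it is Chung's lemma, quoted verbatim in Appendix B as an auxiliary result with a citation to Lemma 4 in Chapter 2.2 of Polyak's book, so there is no in-paper argument to compare yours against. On its own merits, your proof is correct and essentially complete. The unrolling $u_k \le \Pi_{K,k}u_K + d\sum_{j=K}^{k-1}\Pi_{j+1,k}j^{-(1+p)}$ is valid because you start at $K$ large enough that every factor $1-c/i$ is positive, and the product asymptotics $\Pi_{j,k}=(j/k)^c\bigl(1+O(1/j)\bigr)$ hold with a constant uniform in $k$ since the error in the logarithm is bounded by $c\sum_{i\ge j}O(1/i^2)+O(1/j)=O(1/j)$. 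You correctly identify the only delicate point, namely that in the case $c>p$ the multiplicative corrections must contribute only $o(k^{-p})$ so that the constant $d(c-p)^{-1}$ survives; this indeed follows because $\sum_{j\le k} j^{c-p-2}=o(k^{c-p})$ whenever $c-p>0$ (it is $O(k^{c-p-1})$, $O(\log k)$, or $O(1)$ according as $c-p$ exceeds, equals, or is less than $1$), and your splitting at $j\sim\sqrt{k}$ is one clean way to see this. The alternative inductive route with the ansatz $v_k=Ak^{-p}$ that you sketch is closer to the proof typically given in Polyak's book, but either path suffices; the remaining two cases reduce to the elementary estimates $\sum_{j\le k}j^{-1}=\log k+O(1)$ and convergence of $\sum_j j^{c-p-1}$ for $c<p$, exactly as you state.
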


%
% \section*{Acknowledgments}
% We would like to acknowledge the assistance of volunteers in putting
% together this example manuscript and supplement.

\end{document}